 \newtheorem{theorem}{Theorem}[section]
 \newtheorem{lemma}[theorem]{Lemma}
\theoremstyle{definition}
\newtheorem{definition}[theorem]{Definition}
\newtheorem{example}[theorem]{Example}
\newtheorem{corollary}[theorem]{Corollary}
\newtheorem{proposition}[theorem]{Proposition}
\theoremstyle{remark}
\newtheorem{remark}[theorem]{Remark}
\numberwithin{equation}{section}
\DeclareMathOperator{\AP}{AP}
\DeclareMathOperator{\re}{Re}
\begin{document}
\def\C{\mathbb C}
\def\R{\mathbb R}
\def\X{\mathbb X}
\def\cA{\mathcal A}
\def\cT{\mathcal T}
\def\Z{\mathbb Z}
\def\Y{\mathbb Y}
\def\Q{\mathbb Q}
\def\G{\mathcal G}
\def\Z{\mathbb Z}
\def\N{\mathbb N}
\def\M{\mathcal M}
\def\cal{\mathcal}

\def\cD{\cal D}
\def\tD{\tilde{{\cal D}}}
\def\F{\cal F}
\def\tf{\tilde{f}}
\def\tg{\tilde{g}}
\def\tu{\tilde{u}}

\title[Linear Almost Periodic Differential Equations]{Asymptotic Behavior of Linear\\ Almost Periodic Differential Equations}
\author{Bui Xuan Dieu}
\address{Department of Mathematics, Dresden University of Technology, 01062 Dresden, Germany\\ School of Applied Mathematics \& Informatics, Hanoi University of Science and Technology, 01 Dai Co Viet road, Ha Noi, Viet Nam}
\email{Bui.Xuan\_Dieu@mailbox.tu-dresden.de, dieu.buixuan@hust.edu.vn}
\author{Luu Hoang Duc}
\address{Department of Mathematics, Dresden University of Technology, 01062 Dresden, Germany\\ Institute of Mathematics, Vietnam Academy of Science and Technology, 18 Hoang Quoc Viet road, 10307 Ha Noi, Viet Nam}
\email{Hoang\_Duc.Luu@tu-dresden.de, lhduc@math.ac.vn}
\author{Stefan Siegmund}
\address{Department of Mathematics, Dresden University of Technology, 01062 Dresden, Germany}
\email{stefan.siegmund@tu-dresden.de}
\author{Nguyen Van Minh}\thanks{The third author (N.V.M) is the corresponding author}
\address{Department of Mathematics, Columbus State University, 4225 University Avenue,
Columbus, GA 31907. USA} \email{nguyen\_minh2@columbusstate.edu}

%    General info

\subjclass[2010]{Primary:  34C27; Secondary:  34D05, 34D20, 47D06}
\keywords{Strong stability, non-autonomous equation, almost periodicity, evolution semigroup, Perron type conditions}

%%%%%%%%%%%%%% ABSTRACT %%%%%%%%%%%%%%%%%%%%%%%%%%%%%%%
\begin{abstract}
The present paper is concerned with strong stability of solutions of non-autonomous equations of the form $\dot u(t)=A(t)u(t)$, where $A(t)$ is an unbounded operator in a Banach space depending almost periodically on $t$. A general condition on strong stability is given in terms of Perron conditions on the solvability of the associated inhomogeneous equation.
\end{abstract}
\date{\today}

\maketitle

%%%%%%%%%%%%%% INTRODUCTION %%%%%%%%%%%%%%%%%%%%%

\section{Introduction}
In this paper we consider the asymptotic behavior of solutions of evolution equations of the form
\begin{equation}\label{eq0}
\frac{dx}{dt}= A(t)x, \quad t \in \R ,
\end{equation}
in terms of the existence and uniqueness of bounded solutions to the inhomogeneous equations
\begin{equation}\label{eq}
\frac{dx}{dt}= A(t)x+f(t), \quad t \in \R ,
\end{equation}
where $A(t)$ is a family of unbounded linear operators on a (complex) Banach space $\X$ that depends almost periodically on $t$, and $f$ is an almost periodic function taking values in $\X$. Throughout the paper we assume that the homogeneous equation (\ref{eq0}) generates an almost periodic evolutionary process $(U(t,s)_{t\ge s})$ (see Definition \ref{def evpr}).
If  (\ref{eq0}) is autonomous (that is, $A(t)=A$ for all $t$), and $\dim (\X)<\infty$ the classical Lyapunov Theorem states that (\ref{eq}) is strongly stable if all real parts of the eigenvalues of $A$ are negative. In the infinite dimensional case this condition is no longer valid. In fact, one needs more conditions to guarantee the (strong) stability of solutions to (\ref{eq0}). We refer the reader to \cite{arebathieneu,bas,chitom,min3,nee} and the references therein. If $A(t)$ depends on $t$, the spectra $\sigma (A(t))$, in general, do not play any role in determining the behavior of solutions to (\ref{eq0}). If $A(t)$ depends periodically on $t$ with period $\tau$, the period map $P:=U(\tau ,0)$ can be used to study the problem via discrete analogs, results that can be found in \cite{arebat,min,vu}.

\bigskip
The problem becomes much more complicated when $A(t)$ depends on $t$ almost periodically (but not periodically). The idea of linear skew products has been used extensively to study the stability and exponential dichotomy of non-autonomous equations (see e.g. \cite{elljoh,johsel,sacsel} and the references therein). In this direction, the concept of evolution semigroups, as a variation of the aforementioned idea, proves to be a very effective analytic tool
 (see \cite{balgolpar,chilat,minrabsch}). However, since typically evolution semigroups are considered in the function spaces $L^p(\X), C_0(\X)$ or $\AP(\X)$, the spectrum of the evolution semigroup associated with (\ref{eq0}) is too coarse to characterize finer properties of the system like strong stability. Indeed, the spectrum of the generator of the evolution semigroup associated with an evolution equation in one of these function spaces consists of a union of vertical strips in the complex plane, hence the imaginary axis is either contained completely in the spectrum or does not intersect it. On the other hand, the well known ABLV Theorem (see Theorem \ref{the ABLV}) for stability of $C_0$-semigroups allows the generator's spectrum to intersect the imaginary axis.

The main purpose of this paper is to provide a setting in which the idea of evolution semigroups combined with the spectral theory of functions can be further used to study the asymptotic behavior of solutions of (\ref{eq0}). We consider the evolution semigroup associated with (\ref{eq0}) in the smallest invariant subspace of the space of all almost periodic functions $\AP(\X)$ which we call {\it minimal evolution semigroup} of (\ref{eq0}). In general, this smallest invariant function space is determined by the Bohr spectrum of the coefficient operator $A(t)$. The main results (cf.\ Theorem \ref{the main}) we obtain in the paper are extensions of results known in the autonomous and periodic cases. Our conditions for strong stability  are stated in terms of Perron conditions which are very popular in recent studies on stability and dichotomy (see for example \cite{huy1,huy2,minrabsch,pre,pre2,pre3}). We analyze some particular cases as examples of how the obtained results can be applied to equations with almost periodic coefficients.

%%%%%%%%%%%%%% PRELIMINARIES %%%%%%%%%%%%%%%%%%%%%%%%%%%%%%%

\section{Preliminaries}
\subsection{ Almost Periodic Functions}
In this paper we use the concept of almost periodicity in Bohr's sense. The reader is referred to \cite{arebathieneu, levzhi} for some standard definitions and properties of almost periodic functions taking values in a Banach space $\X$.

\begin{definition}
A bounded and continuous function $g:\R \to \X$ is said to be almost periodic in the sense of Bohr (or simply almost periodic) if for each given sequence $\{\tau_n\}_{n=1}^\infty \subset \R$ there exists a subsequence $\{ \tau_{n_k}\}_{k=1}^\infty$ such that the limit
\begin{equation*}
\lim_{k\to\infty} g(t+\tau_{n_k})
\end{equation*}
exists uniformly in $t\in\R$.
\end{definition}
Given an almost periodic function $g$, for each $\lambda \in \R$ the following is shown to exist (see \cite{levzhi})
\begin{equation*}
M_{\lambda,g}:=\lim_{T\to\infty}\frac{1}{2T}\int^T_{-T} e^{-i\lambda t}g(t)dt.
\end{equation*}
And, except for at most a countable set $\sigma_b(g)$ of values of $\lambda$, this limit $M_{\lambda,g}$ is always equal to zero.

\begin{definition}
Let $T\subset \R$. The semi-module generated by $T$, denoted by $sm (T)$, is the set of all real numbers $\mu$ of the form
\begin{equation*}
\mu:= n_1 \lambda_1 +n_2\lambda_2 +\cdots +n_k\lambda_k
\end{equation*}
where $k$ is a positive integer, $\lambda_1,\lambda_2, \dots ,\lambda _k \in T$, and $n_1,n_2,\dots ,n_k$ are non-negative integers.
\end{definition}
Note that by this definition $0\in sm (T)$.

\bigskip
Let $\sigma_b (A)$ be the Bohr spectrum of the almost periodic $A:\R \to \X$. We will denote the semi-module generated by this spectrum by $\Lambda := sm(  \sigma_b (A)  )$. We introduce the following notation
\begin{equation*}
\AP_\Lambda (\X):=\{ g\in \AP(\X): \sigma_b (g) \subset \Lambda \}.
\end{equation*}
Note that $\AP_\Lambda (\X)$ is a closed subspace of $\AP(\X)$.

%%%%%%%%%%%%%%%%%%%%%%%%%%%%%%%%%%%%%%%%%%%%%%%%%%%
Let $A$ be a closed operator in a Banach space $\X$, and let $A$ generate a uniformly bounded $C_0$-semigroup of linear operators $(T(t))_{t\ge 0}$, i.e., $\sup_{t\ge 0} \| A(t)\| <\infty$.  The following lemma is proved in \cite[p. 2073]{batneerab}.
\begin{lemma}\label{added}
Let $x \in \X$ be fixed, then the map
\[
R: \{Re \lambda > 0 \} \to \X, \lambda \mapsto R(\lambda , A)x
\]
is holomorphic. Furthermore, denote by $\sigma_u (A, x)$ {\em the local unitary spectrum} of $x$, i.e.\ the set of points $\lambda \in i\R$ to which $R$ cannot be extended holomorphically. Then $\sigma_ u (A, x) \subset \sigma (A) \cap i\R$, and
\[
\sigma (A) \cap i \R = \underset{x \in \X}{\cup} \sigma_u (A, x).
\]
\end{lemma}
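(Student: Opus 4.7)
The first assertion is standard $C_0$-semigroup theory: Hille--Yosida applied to the uniformly bounded semigroup $(T(t))_{t\ge 0}$ gives $\{\re\lambda>0\}\subset \rho(A)$, and the operator-valued resolvent $\lambda\mapsto R(\lambda,A)$ is analytic on $\rho(A)$, hence so is its evaluation at any fixed $x$. The inclusion $\sigma_u(A,x)\subset \sigma(A)\cap i\R$ is likewise immediate: every $\lambda_0\in i\R\cap\rho(A)$ has an open neighborhood inside $\rho(A)$ on which $R(\cdot,A)x$ is holomorphic, so $\lambda_0\notin \sigma_u(A,x)$; taking the union over $x$ yields $\bigcup_{x}\sigma_u(A,x)\subset \sigma(A)\cap i\R$.

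The substantive content is the reverse inclusion $\sigma(A)\cap i\R\subset \bigcup_{x\in\X}\sigma_u(A,x)$, which I would prove contrapositively. Fix $\lambda_0\in i\R$ and assume that for every $x\in\X$ the function $R(\cdot,A)x$ admits a holomorphic extension $\tilde u_x$ to some open neighborhood of $\lambda_0$. Define $S\colon \X\to \X$ by $Sx:=\tilde u_x(\lambda_0)$; uniqueness of analytic continuation combined with the identity $R(\lambda,A)(\alpha x+\beta y)=\alpha R(\lambda,A)x+\beta R(\lambda,A)y$ on $\{\re\lambda>0\}$ makes $S$ linear. Boundedness will come from Baire category applied to
\[
F_n:=\bigl\{x\in\X : \tilde u_x \text{ is holomorphic on } B(\lambda_0,1/n) \text{ with } \sup_{\lambda\in B(\lambda_0,1/n)}\|\tilde u_x(\lambda)\|\le n\bigr\},
\]
whose union exhausts $\X$ by hypothesis. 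Closedness of each $F_n$ is the single non-routine point: if $x_k\in F_n$ with $x_k\to x$, then $(\tilde u_{x_k})$ is uniformly bounded on $B(\lambda_0,1/n)$ and converges pointwise to $R(\cdot,A)x$ on the nonempty open set $B(\lambda_0,1/n)\cap\{\re\lambda>0\}$ (since $R(\lambda,A)$ is a bounded operator there). A vector-valued Vitali convergence theorem then furnishes uniform convergence on compact subsets to a holomorphic limit $f$, which must equal $\tilde u_x$ by the identity theorem, and the bound $\|f\|\le n$ is inherited pointwise, so $x\in F_n$. Baire now produces a ball $B(x_0,\epsilon)\subset F_n$, and the triangle inequality $\|Sz\|\le \|S(x_0+z)\|+\|Sx_0\|\le 2n$ for $\|z\|<\epsilon$ exhibits $S$ as a bounded linear operator on $\X$.

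To finish I would identify $S$ with $R(\lambda_0,A)$. Fix $x\in\X$ and let $\lambda\to\lambda_0$ inside $\{\re\lambda>0\}$; continuity of $\tilde u_x$ at $\lambda_0$ gives $R(\lambda,A)x\to Sx$ and hence $A R(\lambda,A)x=\lambda R(\lambda,A)x-x\to\lambda_0 Sx-x$. Closedness of $A$ then forces $Sx\in D(A)$ and $(\lambda_0-A)Sx=x$, while the dual limit passage based on $R(\lambda,A)(\lambda-A)y=y$ for $y\in D(A)$ yields $S(\lambda_0-A)y=y$. Consequently $\lambda_0\in\rho(A)$, contradicting $\lambda_0\in\sigma(A)$ and completing the equality. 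The main obstacle, as flagged above, is the closedness of the sets $F_n$: one must pass from pointwise information on the half-disk $B(\lambda_0,1/n)\cap\{\re\lambda>0\}$---where the resolvent is a bounded operator---to uniform information on the full disk, a step requiring vector-valued Vitali (or equivalent Cauchy-estimate arguments coupled with the identity theorem).
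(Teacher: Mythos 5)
The paper does not prove this lemma itself; it simply cites \cite[p.~2073]{batneerab}, and your argument reconstructs essentially the standard proof from that reference: Baire category applied to the sets $F_n$ of vectors whose local resolvent extends to $B(\lambda_0,1/n)$ with bound $n$, closedness of $F_n$ via the vector-valued Vitali theorem, and identification of the resulting bounded operator $S$ with $R(\lambda_0,A)$ using closedness of $A$. Your plan is correct, and you have rightly isolated the only delicate step (closedness of $F_n$, where pointwise convergence on the half-disk plus the uniform bound $n$ on the full disk must be upgraded via Vitali and the identity theorem); the remaining details --- well-definedness and linearity of $S$ from uniqueness of analytic continuation, the scaling $\|Sz\|\le 2n$ on $\|z\|<\epsilon$, and the two-sided inverse relations $(\lambda_0-A)Sx=x$ and $S(\lambda_0-A)y=y$ --- all go through as you describe.
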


%%%%%%%%%%%%%%%%%%%%%%%%%%%%%%%%%%%%%%%%%%%%%%%%%%%
We also restate a well known result from \cite[Theorem 3.4]{batneerab}.

\begin{theorem}[ABLV Theorem]\label{the ABLV}
Suppose $(T(t))_{t \ge 0}$ is a bounded $C_0$-semigroup in a Banach space $\X$ with generator $A$, $x \in \X$ is fixed. Denote by $\sigma_u(A, x)$
the set of $i \beta \in i\R$ such that the local resolvent, $R(\alpha + i\beta, A)x, \alpha > 0$, does not extend
analytically in some neighbourhood of $i\beta$. If
\begin{itemize}
\item [(i)] $\sigma _u(A, x)$ is countable,
\item [(ii)]$ \lim\limits_ {\alpha \downarrow 0} \alpha R(\alpha + i\beta, A)x = 0,$  for all $\beta$ with $i\beta \in \sigma_u(A, x)$,
\end{itemize}
then
\[
\lim\limits_{t \to \infty} \| T(t)x \|  = 0.
\]
\end{theorem}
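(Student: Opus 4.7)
The plan is to adapt the proof of the classical Arendt--Batty--Lyubich--V\~u (ABLV) stability theorem to the local setting provided by Lemma \ref{added}. The starting observation is that, since $(T(t))_{t\ge 0}$ is bounded, the orbit $t\mapsto T(t)x$ is bounded and continuous on $[0,\infty)$, and its Laplace transform coincides with $\lambda\mapsto R(\lambda,A)x$ on the right half-plane $\{\re\lambda>0\}$. By Lemma \ref{added} this Laplace transform extends holomorphically across the imaginary axis except at the countable set $\sigma_u(A,x)$, and the vanishing condition (ii) controls the behavior at each of those exceptional points.

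First, I would reduce to a contractive (hence, after passing to a quotient, isometric) $C_0$-semigroup on the closed invariant subspace generated by $x$. This is the standard limit-isometric-semigroup construction: boundedness of the semigroup permits renorming of the cyclic subspace so that the semigroup acts contractively, and the quotient by the stable subspace yields an isometric $C_0$-group whose generator has $i\R$-spectrum contained in $\sigma_u(A,x)$. The second step uses (ii) to eliminate each point of this residual spectrum: after the reduction, $\alpha R(\alpha+i\beta,A)x$ converges to the mean-ergodic projection onto the $i\beta$-eigenspace of the quotient generator, so (ii) forces that eigenspace to be trivial for every $i\beta\in\sigma_u(A,x)$. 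Because $\sigma_u(A,x)$ is countable and the quotient is a bounded $C_0$-group with empty point spectrum on $i\R$, the Lyubich--V\~u theorem for bounded groups forces the quotient space to collapse to $\{0\}$, which is equivalent to $\|T(t)x\|\to 0$.

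An alternative route avoids the quotient construction and applies a Tauberian theorem of Ingham--Karamata type directly to $\lambda\mapsto R(\lambda,A)x$, using Lemma \ref{added} to justify contour deformation across $i\R$ away from $\sigma_u(A,x)$ and using hypothesis (ii) to handle the behavior at each of the countably many exceptional points via a localized contour indentation argument.

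The main obstacle, in either implementation, is the careful transfer of \emph{local} resolvent information at $x$ to the \emph{asymptotic} behavior of its orbit. The global spectrum $\sigma(A)$ may occupy a much larger portion of $i\R$ than $\sigma_u(A,x)$, so one cannot simply invoke the global ABLV theorem. The delicate step is to make the localization rigorous: in the limit-semigroup approach, one must check that the spectrum of the quotient generator is genuinely controlled by $\sigma_u(A,x)$ rather than by $\sigma(A)\cap i\R$; in the Tauberian approach, one must verify that the contour deformation and residue-type estimates only involve the local resolvent through $x$, and that condition (ii) supplies exactly the quantitative smallness needed to push the contour past each exceptional point without losing the decay.
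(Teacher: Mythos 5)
First, a point of comparison: the paper does not prove Theorem \ref{the ABLV} at all --- it is imported verbatim as \cite[Theorem 3.4]{batneerab}, so there is no ``paper's own proof'' to match your argument against. Your proposal therefore has to stand on its own as a proof of the local ABLV theorem, and as it stands it is a sketch of the right circle of ideas rather than a proof: both routes you outline stall exactly at the point you yourself flag as ``the main obstacle,'' and that obstacle is the entire content of the theorem.

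Concretely, two gaps. In the quotient/isometric-limit route, the step ``the quotient by the stable subspace yields an isometric group whose generator has $i\R$-spectrum contained in $\sigma_u(A,x)$'' is asserted, not proved, and it is not a routine verification. Holomorphic extendability of the single local resolvent $\lambda\mapsto R(\lambda,A)x$ across $i\beta$ does propagate to the orbit $T(s)x$ and to $R(\mu,A)x$, hence to a dense subset of the cyclic subspace, but that does not by itself yield invertibility of $i\beta - A$ on the cyclic subspace or on its quotient: one needs uniform bounds on the extensions and an identification of the extended local resolvents with an actual resolvent of the quotient generator, and this is precisely where a naive localization of the global ABLV argument breaks down. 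In the Tauberian route, a ``localized contour indentation argument'' at each exceptional point does not suffice when $\sigma_u(A,x)$ is countable but has accumulation points (e.g.\ a convergent sequence together with its limit): one cannot indent around infinitely many singularities clustering at a point while retaining the estimates. The actual proof in \cite{batneerab} handles this by a transfinite induction over the Cantor--Bendixson derivatives of the closed countable set $\sigma_u(A,x)$, combined with quantitative contour estimates in which condition (ii) enters at each stage; without some such induction on the derived sets, the argument does not close. So the proposal correctly names the standard strategies and the crux, but the crux is left unresolved.
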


%%%%%%%%%%%%%%%%%%%%%%%%%%%%%%%%%%%%%%%%%%%%%%%%%%%

%%%%%%%%%%%%%%%%%%%%%%%%%%%%%%%%%%%%%%%%%%%%%%%%%%%

\subsection{Evolution semigroups associated with evolutionary processes}

\begin{definition}\label{def evpr}
A two parameter family $(U(t,s))_{t\ge s}$ of bounded linear operators acting in a Banach space $\X$ is said to be an \emph{evolutionary process} if the following conditions are satisfied:
\begin{itemize}
\item[(i)] $U(t,t)=Id$, for all $t\in\R$, where $Id$ is the identity operator of $\X$,
\item[(ii)] $U(t,r)U(r,s)=U(t,s)$ for all $s\le r\le t$,
\item[(iii)] There are non-negative numbers $M,\alpha$ such that $\| U(t,s)\| \le Me^{\alpha (t-s)}$ for all $t\ge s$,
\item[(iv)] The map $(t,s)\mapsto U(t,s)x$ is continuous for each $x\in\X$.
\end{itemize}
An evolutionary process $(U(t,s))_{t\ge s}$ in a Banach space $\X$ is said to be \emph{almost periodic} if
\begin{itemize}
\item[(v)] for each $x\in \X, s\in \R$ the function $\R\ni t \mapsto U(t+s,t)x\in \X$ is almost periodic.
\end{itemize}
\end{definition}

%%%%%%%%%%%%%%%%%%%%%%%%%%%%%%%%%%%%%%%%%%%%%%%%%%%

\begin{definition}\label{evolsmg}
Given a function space $F$ as a subspace of $BC(\R,\X)$. Assume that {$(U(t,s))_{t\ge s}$ is an almost periodic evolutionary process generated by (\ref{eq0})} and, for each $h\ge 0$ and $g\in F$, the function $\R \ni t\mapsto U(t,t-h)g(t-h)$
belongs to $F$. Then, the \emph{evolution semigroup} $(T^h)_{h\ge 0}$ associated with (\ref{eq0}) in the function space $F$ is defined as the family of bounded operators $T^h, h\ge 0$, defined by
\begin{equation*}
[T^hg](t) =U(t,t-h)g(t-h), \quad g\in F, t\in \R , h\ge 0.
\end{equation*}
\end{definition}
From our assumption that $(U(t,s))_{t\ge s}$ is almost periodic evolutionary process, the function $\R \ni t\mapsto U(t,t-h)g(t-h)$ belongs to $\AP(\X)$ for every $g \in \AP(\X)$. Hence by choosing $F:= \AP(\X)$ in the worst case, we see that the evolution semigroup $(T^h)_{h\ge 0}$ associated with (\ref{eq0}) is well defined.

%%%%%%%%%%%%%%%%%%%%%%%%%%%%%%%%%%%%%%%%%%%%%%%%%%%%%%%%%%%%%%%%

\section{Asymptotic Behavior of Solutions}
Consider evolution equations of the form
\begin{equation}\label{eq1}
\dot x=[A_0+A(t)]x, \quad t\in \R , x\in \X ,
\end{equation}
where $A_0$ generates a $C_0$-semigroup denoted by $e^{tA_0},t\ge 0$, and $A:\R \to L(\X)$ is almost periodic in the norm topology.

To equation (\ref{eq1}) we associate the following integral equation
\begin{equation}\label{mild solution}
x(t)=e^{(t-s)A_0}x(s)+\int^t_s e^{(t-\xi )A_0} A(\xi )d\xi , \quad t\ge s; t,s\in \R .
\end{equation}
Every continuous function $x(\cdot )$ on an interval $J$, which is of the form $[a,b]$, $(a,b]$, $(a,b)$ or $[a,b)$, is said to be a mild solution of (\ref{eq}) on $J$.

It is well known that (\ref{eq1}) generates an evolutionary process in $\X$ which is determined by the integral equation (\ref{eq1}). The following theorem shows that this process is almost periodic, and its associated evolution semigroup $(T^h)_{h\ge 0}$ in $\AP(\X)$ leaves the function space $\AP_{\Lambda}(\X)$ invariant.

%%%%%%%%%%%%%%%%%%%%%%%%%%%%%%%%%%%%%%%%%%%%

\begin{theorem}\label{the invariance}
Under the above assumptions and notation, the evolution semigroup associated with equation \eqref{eq} in the function space $\AP_\Lambda (\X)$ is well defined as a $C_0$-semigroup.
\end{theorem}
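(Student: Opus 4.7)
The plan is to realise $U(t,s)$ as a uniformly convergent Dyson series in $A$, use the additive behaviour of Bohr spectra under products to verify invariance of $\AP_\Lambda(\X)$, and then check strong continuity by combining relative compactness of the range of an almost periodic function with the $C_0$-property of $e^{tA_0}$.

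First I would iterate the mild-solution formula (\ref{mild solution}) starting from $U_0(t,s) = e^{(t-s)A_0}$. Since $A$ is almost periodic in norm we have $\sup_{t\in\R}\|A(t)\| < \infty$, and the Picard iterates converge in $L(\X)$ uniformly on each strip $0 \le t-s \le H$ by a standard Gronwall estimate. Applied to $[T^h g](t) = U(t,t-h)g(t-h)$ after the substitution $\xi_i = t - \eta_i$, this expresses $T^h g$ as the series
\[
[T^h g](t) = e^{hA_0} g(t-h) + \sum_{n\ge 1} \int_{0\le \eta_1\le\cdots\le\eta_n\le h} e^{\eta_1 A_0} A(t-\eta_1) e^{(\eta_2-\eta_1)A_0}\cdots A(t-\eta_n) e^{(h-\eta_n)A_0} g(t-h)\, d\eta.
\]
Each translate $t\mapsto A(t-\eta_i)$ is norm-almost-periodic with Bohr spectrum equal to $\sigma_b(A)\subset \Lambda$, and $t\mapsto g(t-h)$ is almost periodic with Bohr spectrum contained in $\Lambda$. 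By the product rule for Bohr spectra in \cite{levzhi}, each integrand, viewed as a $t$-function, has Bohr spectrum in $\sigma_b(A)+\cdots+\sigma_b(A)+\sigma_b(g)\subset\Lambda$, where the inclusion uses that $\Lambda = sm(\sigma_b(A))$ is a semi-module. Integration over $\eta$ and passage to the uniform limit in the closed subspace $\AP_\Lambda(\X)$ then give $T^h g \in \AP_\Lambda(\X)$; specialising $g$ to a constant simultaneously verifies Definition~\ref{def evpr}(v), so $(U(t,s))_{t \ge s}$ is almost periodic.

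The semigroup identity $T^{h_1+h_2} = T^{h_1}T^{h_2}$ is immediate from the cocycle property $U(t,r)U(r,s)=U(t,s)$. For strong continuity I would split
\[
[T^h g](t) - g(t) = U(t,t-h)[g(t-h)-g(t)] + [e^{hA_0}-I]g(t) + \int_{t-h}^t e^{(t-\xi)A_0} A(\xi) U(\xi,t-h) g(t)\, d\xi.
\]
The first term vanishes uniformly in $t$ as $h\downarrow 0$ by uniform continuity of the almost periodic $g$; the integral term is $O(h)$ using the exponential bound on $U$. The middle term is the delicate one: one uses that $g(\R)$ is relatively compact in $\X$, so the $C_0$-semigroup $e^{hA_0}$ converges uniformly to the identity on $\overline{g(\R)}$ as $h\downarrow 0$, yielding the required supremum bound over $t$. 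The only non-routine ingredient in the whole argument is the Bohr-spectrum calculus for pointwise products of norm-almost-periodic operator functions with almost periodic vector functions; once that is invoked from \cite{levzhi}, the rest of the proof is standard.
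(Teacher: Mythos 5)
Your proof is correct, but it follows a genuinely different route from the paper's. The paper does not expand $U(t,s)$ at all: it first observes that the evolution semigroup $(T_0^h)$ of $\dot x=A_0x$ is a $C_0$-semigroup on $\AP_\Lambda(\X)$ and that the multiplication operator $M_A$ is a bounded operator leaving $\AP_\Lambda(\X)$ invariant (proved by the same Approximation--Theorem/semi-module argument you apply term by term), then invokes the bounded perturbation theorem to conclude that $G_{A_0}+M_A$ generates a $C_0$-semigroup $(S^h)$ on $\AP_\Lambda(\X)$, and finally identifies $(S^h)$ with the evolution semigroup by showing that $w(t):=[S^{t-s}v](t)$ satisfies the integral equation \eqref{mild solution} defining $U(t,s)v(s)$. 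That route buys the strong continuity and the invariance of $\AP_\Lambda(\X)$ for free from abstract semigroup theory, and --- importantly for the rest of the paper --- it exhibits the generator explicitly as $G=G_{A_0}+M_A$, which your construction does not. Your Dyson-series route is more self-contained: it makes the spectral inclusion $\sigma_b(T^hg)\subset\Lambda$ transparent at the level of each term, it verifies condition (v) of Definition \ref{def evpr} as a byproduct, and your three-term decomposition for strong continuity (uniform continuity of $g$, compactness of $\overline{g(\R)}$ against the $C_0$-property of $e^{hA_0}$, and an $O(h)$ integral remainder) is exactly the argument the paper silently relies on when it asserts that $(T_0^h)$ is already a $C_0$-semigroup, so you have in effect supplied a detail the paper omits. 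The one point you should make explicit is the passage from the termwise statement to the sum: the integral over the simplex must be read as a Bochner integral in the Banach space $\AP_\Lambda(\X)$, which requires continuity of $\eta\mapsto e^{\eta_1A_0}A(\cdot-\eta_1)\cdots g(\cdot-h)$ as an $\AP_\Lambda(\X)$-valued map (again via relative compactness of ranges); with that noted, closedness of $\AP_\Lambda(\X)$ finishes the argument as you say.
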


%%%%%%%%%%%%%%%%%%%%%%%%%%%%%%%%%%%%%%%%%%%%

\begin{proof}
First, note that the evolution semigroup associated with the evolution equation $\dot x=Ax$ is well defined as a $C_0$-semigroup in $\AP_\Lambda (\X)$. In fact, this follows from the fact that $e^{hA}g(\cdot )$ is in $\AP(\X)$ whenever $g$ is in $\AP(\X)$. Moreover, $\sigma_b(e^{hA}g(\cdot ))\subset \sigma_b(g)$. Next, since $A(\cdot )$ is almost periodic in the norm topology, in $\AP(\X)$ we can define the operator $M_A$ of multiplication by $A(t)$, that is,
\begin{equation*}
M_A: \AP(\X) \ni g \mapsto A(\cdot )g(\cdot ) \in \AP(\X).
\end{equation*}
Note that  $M_A$ leaves $\AP_\Lambda (\X)$ invariant. In fact, this can be checked by using the Approximation Theorem
 of almost periodic functions \cite[Theorem 1.19, p.\ 27]{hinnaiminshi} as follows. Since $g\in \AP_\Lambda (\X)$, there is a sequence of trigonometric polynomials $g_n$ with exponents in $\Lambda$ that approximates $g$. Similarly, we construct a sequence of trigonometric  polynomials $A_n(\cdot)$ that approximates $A(\cdot )$ in norm topology with exponents also in $\Lambda$. Then, $A_n(\cdot )g_n(\cdot )$ approximates $A(\cdot )g(\cdot )$. As $\Lambda$ is the semi-module generated by $\sigma_b(A(\cdot ))$ we get that the exponents of
$A_n(\cdot )g_n(\cdot )$ lie in $\Lambda$.

Let $G_{A_0}$ be the generator of the evolution semigroup $(T^h_0)$ associated with $\dot x=A_0 x$ in $\AP_\Lambda (\X)$. Then, since $M_A$ is a bounded linear operator in $\AP_\Lambda (\X)$, $G_{A_0}+M_A$  generates a $C_0$-semigroup in $\AP_\Lambda (\X)$. We now show that this semigroup in $\AP_\Lambda (\X)$ is nothing but the evolution semigroup of (\ref{mild solution}) in $\AP_\Lambda (\X)$ associated with equation (\ref{eq1}). In fact, let us denote by $(S^h)$ the semigroup that is generated by $G_{A_0}+M_A$ in $\AP_\Lambda (\X)$. Then, this semigroup $(S^h)$ satisfies the equation
\begin{equation*}
S^h v=T^h_0v+\int^h_0 T^{h-\xi}_0M_A S^\xi vd\xi  , \quad \mbox{for all} \ h\ge 0, v\in \AP_\Lambda (\X).
\end{equation*}
Therefore, for each $t\in \R$, by the definition of the evolution semigroup $(T^h_0)$ associated with the equation $\dot x=A_0x$, we have
\begin{eqnarray*}
[S^hv](t) &=& (T_0^hv)(t) +\int^h_0 (T^{h-\xi}_0(M_A S^\xi v))(t)d\xi \\
&=& T_0(h)v(t-h) +\int^h_0 T_0(h-\xi )(M_A S^\xi v)(t -h+\xi )d\xi .
\end{eqnarray*}
Since $h$ and $t$ are arbitrary, we may set $h=t-s$, so the above equation becomes
\begin{eqnarray*}
[S^{t-s}v](t) &=&   T_0(t-s)v(s) +\int^h_0 T_0(t-s-\xi )(M_A S^\xi v)(s+\xi )d\xi     \\
&=& T_0(t-s)v(s) +\int^t_s T_0(t-\eta )(M_A S^{\eta -s} v)(\eta  )d\xi .
\end{eqnarray*}
Define $w(t):=[S^{t-s}v](t) $, then $w$ is the unique solution of the equation
\begin{eqnarray*}
w(t)&=&   T_0(t-s)v(s) +\int^h_0 T_0(t-s-\xi ) A(s+\xi ) S^\xi v(s+\xi )d\xi     \\
&=& T_0(t-s)x +\int^t_s T_0(t-\eta )A(\eta ) w(\eta )d\eta .
\end{eqnarray*}
However, this is the equation that defines $U(t,s)x$. Therefore, we have for all $t\ge s$, $v\in \AP_\Lambda (\X)$
\begin{equation*}
[S^{t-s}v](t)=U(t,s)v(s) .
\end{equation*}
In particular, when $h:=t-s$ we have that $S^hv=U(t,t-h)v(t-h)$ for all $h\ge 0, t\in \R, v\in \AP_\Lambda (\X)$, i.e., $(S^h)_{h\ge 0}$ is the evolution semigroup $(T^h)_{h\ge 0}$ for each $h\ge 0$. This completes the proof of the theorem.
\end{proof}

%%%%%%%%%%%%%%%%%%%%%%%%%%%%%%%%%%%%%%%%%%%%%%%%%%%

\begin{definition}
The evolution semigroup $(T^h)_{h\ge 0}$ associated with (\ref{eq1}) in the function space $\AP_{\Lambda}(\X)$ is called the {\it minimal evolution semigroup} associated with (\ref{eq1}) and $G := G_{A_0} + M_A$ is called the \emph{infinitesimal generator} of $T^h$.
\end{definition}
%%%%%%%%%%%%%%%%%%%%%%%%%%%%%%%%%%%%%%%%%%%%%%%%%%%

%%%%%%%%%%%%%%%%%%%%%%%%%%%%%%%%%%%%%%%%%%%%%%%%%%%
\begin{definition}
A well-posed equation Eq.(\ref{eq0}) is said to be {\it strongly stable} if the evolution process $(U(t,s))_{t\ge s}$ associated with it satisfies:
\begin{equation}
\lim_{t\to\infty} U(t,s)x =0
\end{equation}
for all fixed $x\in \X$ and $s\in \R$.
\end{definition}
If $A(t)$ is independent of $t$ and generates a $C_0$-semigroup, the strong stability of the evolution equation (\ref{eq0}) means that $\lim_{t\to \infty} T(t)x=0$ for all $x\in \X$.

\begin{theorem}\label{the 1}
Equation (\ref{eq1}) is strongly stable if its minimal evolution semigroup $(T^h)_{h\ge 0}$ is strongly stable.
\end{theorem}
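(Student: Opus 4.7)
The plan is to use the simplest possible test function in $\AP_\Lambda(\X)$, namely a constant, and read off the asymptotics of the process from the asymptotics of the semigroup. Fix an arbitrary $x \in \X$ and $s \in \R$. Since the constant function $g(t) \equiv x$ has Bohr spectrum $\sigma_b(g) \subset \{0\}$, and since by definition $0 \in sm(\sigma_b(A)) = \Lambda$, we have $g \in \AP_\Lambda(\X)$. So $g$ is a legitimate vector on which the minimal evolution semigroup acts.

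Next I would unwind the definition of $(T^h)_{h\ge 0}$ on this $g$. By Definition~\ref{evolsmg},
\begin{equation*}
[T^h g](t) = U(t,t-h)\, g(t-h) = U(t,t-h)\, x \qquad \text{for all } t \in \R,\ h \ge 0.
\end{equation*}
In particular, specialising to $t = s+h$ gives
\begin{equation*}
[T^h g](s+h) = U(s+h, s)\, x.
\end{equation*}
Since $T^h g \in \AP_\Lambda(\X)$, pointwise values are bounded by the sup-norm, so
\begin{equation*}
\|U(s+h, s)\, x\|_{\X} \le \|T^h g\|_{\AP_\Lambda(\X)}.
\end{equation*}

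The hypothesis of strong stability for the minimal evolution semigroup means $\|T^h g\|_{\AP_\Lambda(\X)} \to 0$ as $h \to \infty$ for every $g \in \AP_\Lambda(\X)$, and in particular for our constant $g$. Combining this with the previous estimate and the change of variables $t = s+h$ yields $\lim_{t\to\infty}\|U(t,s)x\|_{\X} = 0$. As $x \in \X$ and $s \in \R$ were arbitrary, Equation~(\ref{eq1}) is strongly stable.

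There is no substantive obstacle here; the only point that might require a sentence of justification is the membership $g \equiv x \in \AP_\Lambda(\X)$, which rests solely on the remark (already recorded after the definition of $sm(T)$) that $0 \in sm(T)$ for any $T \subset \R$. The argument is essentially an application of the fact that the evolution semigroup packages all the orbits $U(\cdot,\cdot)x$ into one object acting on $\AP_\Lambda(\X)$, so stability of that object automatically implies stability of the individual orbits.
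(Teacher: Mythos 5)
Your proof is correct and follows essentially the same route as the paper: test the semigroup on the constant function $g\equiv x$ (which lies in $\AP_\Lambda(\X)$ because $0\in\Lambda$) and bound $\|U(s+h,s)x\|$ by the sup-norm $\|T^h g\|$. Your version is in fact slightly more explicit than the paper's, which only writes out the specialisation $t=h$, $s=0$, whereas you carry an arbitrary $s$ through the estimate.
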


%%%%%%%%%%%%%%%%%%%%%%%%%%%%%%%%%%%%%%%%%%%%%%%%%%%

\begin{proof}
Let $0\not= x_0\in\X$. Define $g(t)=x_0$ for all $t\in\X$. Obviously, $g\in \AP_\Lambda (\X)$ because $\sigma_b(g)\subset \{0\}$.
Since $(T^h)_{h\ge 0}$ is strongly stable,
\begin{equation*}
\lim_{h\to\infty} T^hz=0
\end{equation*}
for each $z\in \AP_\Lambda (\X)$. In particular,
\begin{equation*}
\lim_{h\to \infty}T^hg= 0.
\end{equation*}
That means,
\begin{equation*}
0 = \lim_{h\to \infty}\sup_{t\in\R} \| U(t,t-h)x_0\| \ge \lim_{h\to \infty} \| U(h,0)x_0\| \ge 0.
\end{equation*}
Therefore, every mild solution of (\ref{eq1}) is convergent to the origin, proving strong stability of (\ref{eq1}).
\end{proof}

%%%%%%%%%%%%%%%%%%%%%%%%%%%%%%%%%%%%%%%%%%%%%%%%%%%
\begin{definition}
Let us  denote by $\Sigma [(\ref{eq1})]$ the following set
\begin{equation*}
\sigma (G) \cap i\R =\{ \lambda = i\beta | \beta \in \R, i \beta \in \sigma (G)\} ,
\end{equation*}
and call it {\it the spectrum of equation} (\ref{eq1}).
\end{definition}

%%%%%%%%%%%%%%%%%%%%%%%%%%%%%%%%%%%%%%%%%%%%%%%%%%%

It can be checked (see e.g.\ \cite{naimin}) that the generator $G:= G_{A_0}+M_A$ is well-defined.
The domain $D(G)$ of $G$ consists of all functions $u$ in $\AP_\Lambda (\X)$ such that there exists a function $f\in \AP_\Lambda (\X)$ for which
\begin{equation*}
u(t)= U(t,s)u(s)+\int^t_s U(t,\xi )f(\xi )d\xi , \ \mbox{for all} \ t\ge s, t,s\in \R , f\in \AP_\Lambda (\X).
\end{equation*}
And, in this case, for such $f$ and $u$, $Gu=-f$. In the same way, $u\in D(G_{A_0})$ and $G_{A_0}u=-f$ if and only if for all $t \ge s$, $t,s\in \R$, $f\in \AP_\Lambda (\X)$
\begin{equation*}
u(t)= T_0(t-s)u(s)+\int^t_s T_0(t-\xi )f(\xi )d\xi .
\end{equation*}

The operator $G-\lambda$ generates a semigroup $(R^h)$ in $\AP_\Lambda(\X)$ which is uniquely determined by the equation
\begin{equation*}
R^hv=T^hv+\int^h_0 T^{h-\xi } (-\lambda ) R^\xi vd\xi , \quad h\ge 0, v\in \AP_\Lambda (\X).
\end{equation*}
Without difficulty we can check that $R^h=e^{-\lambda h}T^h$ which is exactly the evolution semigroup associated with the process $V(t,s):=e^{-\lambda (t-s)}U(t,s)$. Therefore,
$u\in D(G-\lambda )$ and $(G-\lambda )u=-f$ if and only if
\begin{equation*}
u(t)= e^{-\lambda (t-s)}U(t,s)u(s)+\int^t_s   e^{-\lambda (t-\xi) }  U(t,\xi )u(\xi )d\xi , \quad (t\ge s).
\end{equation*}

Therefore, $\lambda$ belongs to $\rho (G)$ if and only if for each $f\in \AP_\Lambda (\X)$ there is a unique solution $u_{\lambda ,f}$ to the equation
\begin{equation}\label{eq 3.16}
u_{\lambda ,f}(t)=e^{-\lambda (t-s)}U(t,s)u_{\lambda ,f}(s)+\int^t_s e^{-\lambda (t-\xi )}U(t,\xi )f(\xi )d\xi
\end{equation}
for all $t,s\in \R$ with $t \ge s$.

\bigskip
With this preparation we are ready to prove the main result of the paper.

%%%%%%%%%%%%%%%%%%%%%%%%%%%%%%%%%%%%%%%%%%%%%%%%%%%

\begin{theorem}\label{the main}
Assume that
\begin{equation}\label{con 2}
\sup_{t\ge s} \| U(t,s)\| <\infty,
\end{equation}
and for all $\lambda \in i\R$ but at most a countable set $\Sigma$, equation (\ref{eq 3.16}) has a unique solution $u_{\lambda ,f}$ for each given $f\in \AP_\Lambda (\X)$. Moreover, assume that for each $ \lambda \in \Sigma$ and each fixed $f\in \AP_\Lambda (\X)$,
\begin{equation}\label{ergo con}
\lim_{\alpha\downarrow 0}\alpha u_{\lambda +\alpha, f}=0.
\end{equation}
Then equation (\ref{eq1}) is strongly stable.
\end{theorem}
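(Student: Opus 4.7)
The plan is to reduce everything to strong stability of the minimal evolution semigroup $(T^h)_{h\ge 0}$ and then apply the ABLV Theorem to it. By Theorem \ref{the 1}, it suffices to prove that $\lim_{h\to\infty} T^h v = 0$ for every $v \in \AP_\Lambda(\X)$, so the task becomes a statement about the $C_0$-semigroup $(T^h)_{h\ge 0}$ with generator $G = G_{A_0} + M_A$ on the Banach space $\AP_\Lambda(\X)$.

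I will verify the three hypotheses of Theorem \ref{the ABLV}. Boundedness of the semigroup is immediate from condition (\ref{con 2}), since for each $v \in \AP_\Lambda(\X)$,
\begin{equation*}
\sup_{h\ge 0}\|T^h v\| = \sup_{h\ge 0}\sup_{t\in\R}\|U(t,t-h)v(t-h)\| \le \Bigl(\sup_{t\ge s}\|U(t,s)\|\Bigr)\,\|v\|.
\end{equation*}
In particular $\sigma(G) \subset \{\re\lambda \le 0\}$, so every $\mu$ with $\re\mu > 0$ lies in $\rho(G)$.

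The next step is to translate the unique-solvability assumption into a spectral statement via the characterization given just before the theorem: $\lambda \in \rho(G)$ exactly when (\ref{eq 3.16}) has a unique solution $u_{\lambda,f}$ for every $f \in \AP_\Lambda(\X)$, and in that case $R(\lambda, G)f = u_{\lambda,f}$. Therefore the hypothesis forces $\sigma(G) \cap i\R \subset \Sigma$, a countable set, and Lemma \ref{added} gives $\sigma_u(G,v) \subset \sigma(G)\cap i\R \subset \Sigma$ for each $v$, which verifies condition (i) of ABLV. For condition (ii), fix $v \in \AP_\Lambda(\X)$ and $i\beta \in \sigma_u(G,v) \subset \Sigma$. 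For every $\alpha > 0$ we have $\alpha + i\beta \in \rho(G)$ and $R(\alpha+i\beta, G)v = u_{\alpha+i\beta,v}$, so hypothesis (\ref{ergo con}) applied at $\lambda = i\beta$ delivers $\alpha R(\alpha+i\beta, G)v \to 0$ as $\alpha \downarrow 0$. The ABLV Theorem then gives $\|T^h v\|\to 0$, and Theorem \ref{the 1} concludes strong stability of (\ref{eq1}).

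The proof is essentially packaging, not a genuine obstacle: the real content lies in the preparatory material, namely the identification $R(\lambda, G)f = u_{\lambda,f}$ via the shifted process $V(t,s) = e^{-\lambda(t-s)}U(t,s)$, and the fact (from Theorem \ref{the invariance}) that $(T^h)$ makes sense and is a $C_0$-semigroup on the \emph{minimal} space $\AP_\Lambda(\X)$. Once those are in hand, all that remains is the bookkeeping above; the potential subtlety to watch for is that ABLV is applied on $\AP_\Lambda(\X)$ rather than on $\X$, which is crucial because this is the function space in which the resolvent is controlled by hypothesis.
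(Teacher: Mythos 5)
Your proof is correct and follows essentially the same route as the paper's: uniform boundedness of $(T^h)$ from (\ref{con 2}), identification of the resolvent $R(\lambda,G)f$ with $u_{\lambda,f}$ via the discussion preceding the theorem, verification of the two ABLV hypotheses on $\AP_\Lambda(\X)$, and the reduction to Theorem \ref{the 1}. If anything, you spell out the inclusion $\sigma_u(G,v)\subset\sigma(G)\cap i\R\subset\Sigma$ slightly more carefully than the paper does.
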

\begin{proof}
First, by (\ref{con 2}) the semigroup $(T^h)$ is uniformly bounded. By the Spectral Inclusion of $C_0$-semigroups (see Pazy \cite{pazy}), $\sigma (G) \subset \{ z\in \C : Re z \le 0\}$. Since $Re (\alpha + \lambda) = \alpha >0$, by Lemma \ref{added}, $\alpha + \lambda \in \rho (G)$, and thus, (\ref{ergo con}) makes sense.
Since $\Sigma$ is countable, also $\sigma _u(G, u)$ is countable.
The theorem is obtained by applying directly the ABLV Theorem \ref{the ABLV} to the evolution semigroup $(T^h)$ in $\AP_\Lambda (\X)$. In fact, as shown above, (\ref{ergo con}) is exactly the condition that
\begin{equation*}
\lim_{\alpha\downarrow 0}\alpha R(\alpha +\lambda ,G)u=0
\end{equation*}
for each $u\in \AP_\Lambda (\X)$, $\lambda \in \Sigma$. This means that the evolution semigroup $(T^h)$ is strongly stable in $\AP_\Lambda (\X)$. By Theorem \ref{the 1}, this yields the strong stability of (\ref{eq1}).
\end{proof}

\subsection*{Special cases of Theorem \ref{the main}}
Below we will discuss several special cases of the above theorem.
\begin{example}
If $A(t)=0$ for all $t$, then $\Lambda =\{ 0\}$. Therefore, $\AP_\Lambda (\X)$ is nothing but the space of all constant functions, hence it can be identified with $\X$. The evolution semigroup associated with (\ref{eq1}) is actually the semigroup $e^{tA_0}$ generated by the operator $A_0$.
\end{example}
Therefore, the following corollary is obvious, and is the ABLV Theorem.
\begin{corollary}
Let $A_0$ generate a uniformly bounded semigroup $T(t)$, and let $\sigma (A_0)\cap i\R $ be countable. Moreover, let for each $i\lambda\in \sigma (A_0)\cap i\R$ and $x\in \X$
\begin{equation*}
\lim_{\alpha \downarrow 0}\alpha R(i\lambda +\alpha ,A_0)x=0.
\end{equation*}
Then, equation (\ref{eq1}) is strongly stable.
\end{corollary}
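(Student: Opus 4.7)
The plan is to apply Theorem \ref{the main} directly, specializing to the autonomous setting $A(t)\equiv 0$ highlighted in the preceding Example. In this situation $\Lambda=\{0\}$, so the function space $\AP_\Lambda(\X)$ consists precisely of constant $\X$-valued functions and is isometrically isomorphic to $\X$ via $g\equiv x\mapsto x$. The evolutionary process generated by (\ref{eq1}) is $U(t,s)=T(t-s)$ and the associated minimal evolution semigroup on the constants acts as $[T^h g]\equiv T(h)x$; under the above identification its infinitesimal generator $G$ is therefore $A_0$ itself.

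First I would verify condition (\ref{con 2}). Since $T(t)$ is uniformly bounded in $t\ge 0$ by hypothesis, $\sup_{t\ge s}\|U(t,s)\|=\sup_{h\ge 0}\|T(h)\|<\infty$, as required.

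Next I would identify the exceptional countable set of Theorem \ref{the main} with $\Sigma:=\sigma(A_0)\cap i\R$, which is countable by hypothesis. For any $\lambda=i\beta\in i\R\setminus\Sigma$ one has $\lambda\in\rho(A_0)$, and for every constant $f\equiv x\in\X$ the abstract Perron equation (\ref{eq 3.16}) collapses, via the characterization $(G-\lambda)u=-f$ in $\AP_\Lambda(\X)$, to the resolvent identity $(\lambda-A_0)u=x$, which has the unique (constant) solution $u_{\lambda,f}\equiv R(\lambda,A_0)x$. For $\lambda=i\beta\in\Sigma$ and $\alpha>0$, the same identification gives $u_{\lambda+\alpha,f}\equiv R(i\beta+\alpha,A_0)x$, so the ergodic condition (\ref{ergo con}) in $\AP_\Lambda(\X)$ becomes
\[
\lim_{\alpha\downarrow 0}\alpha R(i\beta+\alpha,A_0)x=0,
\]
which is precisely the remaining hypothesis of the corollary. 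Theorem \ref{the main} then yields the strong stability of (\ref{eq1}).

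I do not anticipate a substantive obstacle: the entire proof is a translation of the corollary's hypotheses into those of Theorem \ref{the main} under the identification $\AP_\Lambda(\X)\cong\X$. The one conceptual point worth stating carefully is that in the autonomous case the abstract equation (\ref{eq 3.16}) degenerates to the resolvent equation for $A_0$, so that the Perron-type solvability and ergodic hypotheses of Theorem \ref{the main} match exactly with the spectral hypotheses of the corollary. This is essentially why the authors call the corollary "obvious."
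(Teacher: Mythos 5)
Your proposal is correct and follows exactly the route the paper intends: the preceding Example already performs the identification $A(t)\equiv 0$, $\Lambda=\{0\}$, $\AP_\Lambda(\X)\cong\X$, $G=A_0$, after which the corollary is read off from Theorem \ref{the main} (the paper simply declares it ``obvious'' on this basis). Your explicit verification that (\ref{eq 3.16}) degenerates to the resolvent equation and that (\ref{ergo con}) becomes the stated local ergodic condition is the same argument, just written out in full.
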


\begin{example}
Consider the linear evolution equation
\[
\frac{dx}{dt} = A(t)x,
\]
where $x \in \X$, $A(t)$ is a (not necessarily bounded) linear operator acting on $\X$ for every fixed $t$ and $A(t+1) = A(t)$.
Then $\AP_\Lambda (\X)$ is nothing but the space of all $1$-periodic functions, and
the process $(U(t,s))_{t\ge s}$ is $1$-periodic (see \cite{naimin} for related concepts and results). By \cite[Proposition 1]{naimin}
equation (\ref{eq 3.16}) has a unique solution in $AP_\Lambda (\X)$ if and only if $1\in \rho ({e^{-\lambda} U(1,0)})$,  or equivalently, $e^{\lambda } \in \rho (U(1,0))$. Let us denote by $P$ the monodromy operator $U(1,0)$. It is well known that the strong stability of the $1$-periodic evolutionary process $(U(t,s))_{t\ge s}$ can be studied via the stability of its monodromy operator $P$ (see, for example, \cite{vu}). A discrete version of the ABLV Theorem on stability of individual orbits of the monodromy operator $P$ is given in \cite[Corollary 3.3]{min}. We now show that the conditions of \cite[Corollary 3.3]{min} actually yield the conditions of our Theorem \ref{the main}. In fact, the countability of $\Sigma[(\ref{eq1})]$ follows from the countability of $\sigma (P) \cap \Gamma$, where $\Gamma$ is the unit circle of the complex plane. Next, let $i\lambda_0 \in \Sigma (\ref{eq1})$. Then, $z_0:=e^{i\lambda_0} \in \sigma (P)\cap \Gamma$. Therefore,
\begin{eqnarray}\label{3.25}
\lim_{z\downarrow z_0}  (z-z_0)R(z,P)x_0 =0.
\end{eqnarray}
We will show that (\ref{3.25}) yields (\ref{ergo con}). By the definition of limit (\ref{3.25}) in \cite{min}, (\ref{3.25}) can be re-written as
\begin{eqnarray*}
\lim_{\alpha \downarrow 0}  (e^{i\lambda_0+\alpha}-e^{i\lambda_0})R(e^{i\lambda_0 +\alpha},P)x_0 =0.
\end{eqnarray*}
Or equivalently,
\begin{eqnarray*}
\lim_{\alpha \downarrow 0}  (e^{\alpha}-1)R(e^{i\lambda_0 +\alpha},P)x_0 =0,
\end{eqnarray*}
and hence,
\begin{eqnarray*}
\lim_{\alpha \downarrow 0} \alpha R(e^{i\lambda_0 +\alpha},P)x_0 =0.
\end{eqnarray*}
Let $f\in AP_{\Lambda }(\X)$, that is, $f$ is an arbitrary continuous $1$-periodic function taking values in $\X$. Then, let
\begin{equation*}
x_0:= \int^1_0 e^{-(i\lambda_0+\alpha)(1-\xi )} U(1,\xi )f(\xi ) d\xi .
\end{equation*}
And let $u_{i\lambda_0+\alpha ,f}$ be the unique solution to equation (\ref{eq 3.16}). Then, it is easy to check that
\begin{equation*}
u_{i\lambda_0+\alpha ,f}(0) =  R(e^{i\lambda_0 +\alpha},P)x_0.
\end{equation*}
Therefore, by the definition of the evolutionary process,
\begin{eqnarray*}
\| u_{i\lambda_0+\alpha ,f}\|  :=  \sup_{t\in \R }\| u_{i\lambda_0+\alpha ,f}(t)\| & =& \sup_{0\le t\le 1}  \| u_{i\lambda_0+\alpha ,f}(t)\| \nonumber \\
&=& Me^\beta   \| u_{i\lambda_0+\alpha ,f}(0)\| \nonumber \\
&\le &  Me^\beta \| R(e^{i\lambda_0 +\alpha},P)x_0\| ,
\end{eqnarray*}
where the positive numbers $M,\beta$ depend only on the process $(U(t,s))_{t\ge s}$ (see Definition \ref{def evpr}). This shows that (\ref{3.25}) yields (\ref{ergo con}), and thus, the following result is a corollary to Theorem \ref{the main}.
\begin{corollary}\label{lem 3.3}
Let the monodromy operator $P$ of the $1$-periodic evolutionary process $(U(t,s))_{t\ge s}$ be a power bounded
operator, i.e.\ $\sup_{n \in \N}\|P^n\| < \infty$,
such that $\sigma (P)\cap \Gamma$ is a countable set.
Moreover, assume that for each $\xi_0\in \sigma (P)\cap \Gamma$ the following holds for each $x_0\in \X$
\begin{equation*}
\lim_{\lambda  \downarrow \xi_0} (\lambda -\xi_0 )R(\lambda ,  P)x_0 =0 .
\end{equation*}
Then, for every $x_0\in\X$ and for every $s\in \R$,
\begin{equation*}
\lim_{t\to\infty} U(t,s)x_0=0 .
\end{equation*}
\end{corollary}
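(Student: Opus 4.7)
The plan is to derive the corollary directly from Theorem \ref{the main} by translating each hypothesis on the monodromy operator $P$ into the corresponding hypothesis on the generator $G$ of the minimal evolution semigroup. Since $A(t+1)=A(t)$, the Bohr exponents of $A(\cdot)$ lie in $2\pi\Z$, so $\AP_\Lambda(\X)$ is identified with the space of continuous $1$-periodic $\X$-valued functions. I would invoke \cite[Proposition 1]{naimin} to note that $(\ref{eq 3.16})$ has a unique solution in $\AP_\Lambda(\X)$ if and only if $e^{\lambda}\in\rho(P)$, whence $\Sigma = \{i\beta : e^{i\beta}\in\sigma(P)\cap\Gamma\}$; the assumed countability of $\sigma(P)\cap\Gamma$ therefore gives countability of $\Sigma$.

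Next I would verify the uniform boundedness condition (\ref{con 2}). For any $t\ge s$ write $t=n+t'$, $s=m+s'$ with integers $n\ge m$ and $t',s'\in[0,1)$. The cocycle identity together with $1$-periodicity yields $U(t,s) = U(t',0)\,P^{\,n-m-1}\,U(1,s')$ (with obvious adjustment when $n=m$), so by property (iii) in Definition \ref{def evpr} applied on subintervals of length at most $1$ and the power boundedness $\sup_n\|P^n\|<\infty$, one obtains a uniform bound on $\|U(t,s)\|$.

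The main step, and the only substantive calculation, is to show that the resolvent condition on $P$ implies the ergodic condition (\ref{ergo con}). Fix $i\lambda_0\in\Sigma$ and $f\in\AP_\Lambda(\X)$. Setting $t=1$, $s=0$ in (\ref{eq 3.16}) gives $u_{i\lambda_0+\alpha,f}(0) = R(e^{i\lambda_0+\alpha},P)x_0$ where $x_0 := \int_0^1 e^{-(i\lambda_0+\alpha)(1-\xi)}U(1,\xi)f(\xi)\,d\xi$ (this identity is exactly what is displayed in the example preceding the statement). Propagating along $[0,1]$ via the variation-of-constants formula and extending by $1$-periodicity, I obtain the sup-norm estimate
\begin{equation*}
\|u_{i\lambda_0+\alpha,f}\|_\infty \le Me^\beta\,\|R(e^{i\lambda_0+\alpha},P)\,x_0\|
\end{equation*}
already indicated in the example. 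Multiplying by $\alpha$ and using $\alpha = \alpha/(e^\alpha-1)\cdot(e^\alpha-1)$ together with the assumed limit $\lim_{\lambda\downarrow e^{i\lambda_0}}(\lambda-e^{i\lambda_0})R(\lambda,P)x_0=0$ yields $\lim_{\alpha\downarrow 0}\alpha\, u_{i\lambda_0+\alpha,f} = 0$, which is precisely (\ref{ergo con}).

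The expected obstacle is checking that the bound on $\|u_{i\lambda_0+\alpha,f}\|_\infty$ indeed reduces to the $t=0$ value and is uniform in a neighborhood of $\alpha=0$, so that multiplying through by $\alpha$ preserves the limit — but this is controlled by the same constants $M,\beta$ appearing in the process and does not blow up as $\alpha\downarrow 0$. Once these three ingredients are in place, a direct application of Theorem \ref{the main} gives strong stability of (\ref{eq1}), i.e.\ $U(t,s)x_0\to 0$ for every $x_0\in\X$ and $s\in\R$.
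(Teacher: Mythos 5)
Your proposal is correct and follows essentially the same route as the paper: identifying $\AP_\Lambda(\X)$ with the $1$-periodic functions, invoking \cite[Proposition 1]{naimin} to equate solvability of (\ref{eq 3.16}) with $e^\lambda\in\rho(P)$, deriving $u_{i\lambda_0+\alpha,f}(0)=R(e^{i\lambda_0+\alpha},P)x_0$ and the sup-norm bound $Me^\beta\|R(e^{i\lambda_0+\alpha},P)x_0\|$, and converting the discrete resolvent limit into (\ref{ergo con}) via $e^\alpha-1\sim\alpha$ before applying Theorem \ref{the main}. Your explicit verification of the uniform bound (\ref{con 2}) from power boundedness is a small addition the paper leaves implicit, but the argument is otherwise the same.
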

\end{example}

\medskip
In summary, Theorem \ref{the main} covers two well known special cases of non-autonomous equations, including the autonomous and periodic cases. For the general case of non-autonomous equations the generator ${G}$ of the evolution semigroup may have a more complicated spectrum, and we will discuss this topic in the next section.

\section{Analysis of the Spectrum of the Generator ${ G}$}
As shown in the previous section, the spectrum of the generator $G$ of the evolution semigroup $(T^h)_{h\ge 0}$ plays an important role in studying the asymptotic behavior of the equations. Moreover, in the autonomous and periodic cases this spectrum may not be the whole vertical strips.
In this section we will give a detailed analysis of the spectrum of the generator ${G}$ of the evolution semigroup associated with certain non-periodic equations and applications of  the results obtained from the previous section.

\begin{proposition}\label{semimodule=module_case}
Let $(T^h)$ be the minimal evolution semigroup associated with (\ref{eq1}), and $G$ be its generator. Assume further that
the semi-module generated by the set of frequencies of the function $A(\cdot )$ is actually a module.
Then, for each $\lambda \in \Lambda$
\begin{eqnarray*}
i\lambda +\sigma (G) &\subset& \sigma (G)\\
i\lambda +\rho (G) &\subset& \rho (G).
\end{eqnarray*}
\end{proposition}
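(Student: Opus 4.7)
The plan is to construct an explicit similarity on $\AP_\Lambda(\X)$ that intertwines the evolution semigroup $(T^h)$ with the rescaled semigroup $(e^{-i\lambda h}T^h)$, so that $G$ is similar to $G-i\lambda I$ and the spectral translation invariance follows. For each $\lambda\in\Lambda$, I would introduce the pointwise multiplication operator
\begin{equation*}
M_\lambda:\AP_\Lambda(\X)\to\AP_\Lambda(\X),\qquad (M_\lambda g)(t):=e^{i\lambda t}g(t).
\end{equation*}
First I would verify that $M_\lambda$ is well-defined: for $g\in\AP_\Lambda(\X)$, one has $\sigma_b(e^{i\lambda\cdot}g)\subset \lambda+\sigma_b(g)\subset\lambda+\Lambda\subset\Lambda$ since $\Lambda$ is closed under addition. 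The crucial use of the \emph{module} hypothesis is that $-\lambda\in\Lambda$ as well, so that $M_{-\lambda}$ is also a well-defined operator on $\AP_\Lambda(\X)$ and $M_{-\lambda}M_\lambda=M_\lambda M_{-\lambda}=I$. Since $|e^{i\lambda t}|=1$, $M_\lambda$ is an isometric isomorphism of $\AP_\Lambda(\X)$.

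Next I would establish the intertwining relation by a direct computation: for $g\in\AP_\Lambda(\X)$ and $h\ge 0$,
\begin{equation*}
[T^h M_\lambda g](t)=U(t,t-h)\,e^{i\lambda(t-h)}g(t-h)=e^{-i\lambda h}[M_\lambda T^h g](t),
\end{equation*}
so that $M_\lambda^{-1}T^h M_\lambda=e^{-i\lambda h}T^h$ for every $h\ge 0$. The generator of the semigroup $(e^{-i\lambda h}T^h)$ is obtained by differentiating at $h=0$ and equals $G-i\lambda I$. Since similar $C_0$-semigroups have similar generators, this yields $M_\lambda^{-1}GM_\lambda=G-i\lambda I$ with domain $D(G-i\lambda I)=M_\lambda^{-1}D(G)$. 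Similar closed operators share the same spectrum and resolvent set, so $\sigma(G)=\sigma(G-i\lambda I)=\sigma(G)-i\lambda$ and $\rho(G)=\rho(G)-i\lambda$, which rearranges to the claimed inclusions $i\lambda+\sigma(G)\subset\sigma(G)$ and $i\lambda+\rho(G)\subset\rho(G)$ (in fact equalities).

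The main conceptual obstacle is precisely the module assumption: were $\Lambda$ only a semi-module, $M_\lambda$ would still map $\AP_\Lambda(\X)$ into itself, but in general would fail to be surjective, so no genuine similarity would be available and one could hope only for a one-sided inclusion. Requiring $-\lambda\in\Lambda$ is what makes $M_\lambda$ a bounded invertible operator on $\AP_\Lambda(\X)$, and this single fact is the entire mechanism behind the translation invariance. Beyond that, the verification is routine; the only technical care needed is to confirm that the differentiation passing from semigroups to generators does not pick up domain issues, which is immediate because $M_\lambda$ is bounded and boundedly invertible and thus preserves both the Laplace transform formula for the resolvent and the definition of the generator.
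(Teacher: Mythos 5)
Your proof is correct, and it rests on exactly the same mechanism as the paper's --- multiplication by $e^{i\lambda t}$, with the module hypothesis supplying $-\lambda\in\Lambda$ and hence the invertibility of $M_\lambda$ on $\AP_\Lambda(\X)$ --- but you organize it differently. The paper works at the level of the Perron-type characterization of $\rho(G)$: it uses the fact (established just before Theorem \ref{the main}) that $\mu\in\rho(G)$ iff the integral equation \eqref{4.3} is uniquely solvable in $\AP_\Lambda(\X)$ for every $f$, and then observes that $u\mapsto e^{i\lambda\cdot}u$ carries solutions of the $\mu$-equation to solutions of the shifted equation. You instead lift the multiplication to an isometric similarity $M_\lambda^{-1}T^hM_\lambda=e^{-i\lambda h}T^h$ of the semigroups and conclude $M_\lambda^{-1}GM_\lambda=G-i\lambda I$, so that $\sigma(G)$ and $\rho(G)$ are invariant under translation by $i\lambda$ (indeed with equality, which is slightly stronger than the stated inclusions). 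Your route has two small advantages: it does not need the solvability characterization of $\rho(G)$ at all, and it automatically handles a bit of bookkeeping that the paper glosses over --- when one substitutes $y=e^{i\lambda t}x$ into \eqref{4.3}, the forcing term also picks up a factor $e^{i\lambda\xi}$, so the shifted equation is solved with forcing $e^{i\lambda\cdot}f$ rather than $f$ itself; this is harmless only because $f\mapsto e^{i\lambda\cdot}f$ is a bijection of $\AP_\Lambda(\X)$, which is precisely the invertibility of $M_\lambda$ that your argument makes explicit. The only hypotheses you implicitly rely on are that $(T^h)$ is a well-defined $C_0$-semigroup on $\AP_\Lambda(\X)$ (Theorem \ref{the invariance}) and the standard facts that $e^{-i\lambda h}T^h$ has generator $G-i\lambda I$ and that similar closed operators have equal spectra; both are unobjectionable.
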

\begin{proof}
Let $\lambda \in \Lambda$. The above inclusions are actually equivalent to the claim that $\mu\in \rho (G)$ if and only if $i\lambda +\mu \in \rho (G)$. By the argument that precedes Theorem \ref{the main}, $\mu\in \rho (G)$ if and only if for each $f\in \AP_\Lambda (\X)$ the integral equation
\begin{equation}\label{4.3}
x(t)=e^{-\mu (t-s)}U(t,s)x(s)+\int^t_s e^{-\mu (t-\xi )}U(t,\xi )f(\xi )d\xi \quad (t\ge s)
\end{equation}
has a unique solution in $\AP_\Lambda (\X)$, denoted by $u_{\mu ,f}$. Since $\Lambda$ is assumed to be a module, $u_{\mu ,f}\in \AP_\Lambda (\X)$ if and only if the function $v:\R \to \R, t\mapsto e^{i\lambda t}u_{\mu ,f}$, belongs to $\AP_\Lambda (\X)$. Moreover,
$u_{\mu ,f}$ is the unique solution of (\ref{4.3}) if and only if $v$ is the unique solution of the equation
\begin{equation*}
y(t)=e^{-(i\lambda +\mu) (t-s)}U(t,s)y(s)+\int^t_s e^{-(i\lambda +\mu )(t-\xi )}U(t,\xi )f(\xi )d\xi \quad (t\ge s),
\end{equation*}
that is, $i\lambda +\mu \in \rho (G)$.
\end{proof}

%%%%%%%%%%%%%%%%%%%%%%%%%%%%%%%%%%%%%%%%%%%%%%%%%%%

\begin{remark}
As an example for the case when $\Lambda$ is a module we can take the equation $\dot x (t) = (e^{it}+e^{-it})x(t)$. Then, the semi-module generated by the set of frequencies is the set of all reals of the form
$$
n\cdot 1 +m \cdot (-1),
$$
where $m,n$ are non-negative integers, hence $\Lambda = \Z$. In the one dimensional case, $sm (\sigma_b(a)) = m (\sigma_b(a))$ if $a(t)$ is a periodic function and its spectrum contains both positive and negative parts, or if $a(t)$ is an almost periodic function that has a symmetric spectrum.
\begin{proof}
If $a(t)$ is $L$-periodic function with Fourier series
\[
 \sum_{k \in \Z} C_k e^{2\pi ikx/L}
\]
then $ \sigma_b (a) = \{2\pi k/L | C_k \ne 0\}$ (see \cite[Example 1.3, p.24]{hinnaiminshi}).
Suppose $k_i < 0 < k_j$ such that $C_{k_i} \ne 0, C_{k_j} \ne 0$. Let $\lambda = \frac{2 \pi}{L} (n_1 k_1 + n_2 + \cdots n_p k_p)$ be arbitrary element of $m (\sigma_b(a))$. Since $sm (k_i, k_j) = m (k_i, k_j)$, $$sm (k_1, k_2, \cdots, k_p, k_i, k_j) = m (k_1, k_2, \cdots, k_p, k_i, k_j).$$
So $\lambda \in \frac{2\pi}{L}. m (k_1, k_2, \cdots, k_p, k_i, k_j) = sm (k_1, k_2, \cdots, k_p, k_i, k_j) \subset sm (\sigma_b (a))$. It means $m (\sigma_b(a)) \subset sm (\sigma_b (a))$ and hence $m (\sigma_b(a)) = sm (\sigma_b (a))$. The rest case is quite trivial.
\end{proof}If $\Lambda$ is not a module, the situation may be more complicated.
\end{remark}

%%%%%%%%%%%%%%%%%%%%%%%%%%%%%%%%%%%%%%%%%%%%%%%%%%%

In order to analyze the spectrum of the generator $G$ of the minimal evolution semigroup associated with (\ref{eq1}), in case the non-autonomous term $A(t)$ is small, it is useful to consider the generator $G_{0,\Lambda} $ of the evolution semigroup $(T_{0,\Lambda} ^h)$ associated with the equation $\dot u=A_0u$ in the function space $\AP_\Lambda (\X)$.
\begin{lemma}
Assume that $A_0$ is a sectorial operator, and the semi-module $\Lambda$ is a closed subset of the real line. Under the above assumption and notation,
\begin{equation*}
\sigma (G_{0,\Lambda}) = \sigma (A_0) - i\Lambda .
\end{equation*}
\end{lemma}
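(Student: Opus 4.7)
The plan is to exploit the Bohr--Fourier decomposition of $\AP_\Lambda(\X)$. For $\lambda \in \Lambda$ and $y \in D(A_0)$, the exponential $g_{\lambda,y}(t) := e^{i\lambda t}y$ lies in $\AP_\Lambda(\X) \cap D(G_{0,\Lambda})$ and, directly from $(T^h_{0,\Lambda} g)(t) = e^{hA_0}g(t-h)$ by differentiation at $h = 0$, satisfies $G_{0,\Lambda}\, g_{\lambda,y} = e^{i\lambda \cdot}(A_0 - i\lambda)y$. By density of trigonometric polynomials with frequencies in $\Lambda$, the resolvent equation $(z - G_{0,\Lambda})u = f$ decomposes formally, on the $\lambda$-th Bohr--Fourier coefficient, into $(z + i\lambda - A_0)\, M_{\lambda,u} = M_{\lambda,f}$, and the proof reduces to turning this heuristic into two operator-theoretic inclusions.

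For $\sigma(A_0) - i\Lambda \subseteq \sigma(G_{0,\Lambda})$, I would fix $\mu \in \sigma(A_0)$ and $\lambda \in \Lambda$ and suppose toward contradiction that $z := \mu - i\lambda \in \rho(G_{0,\Lambda})$. For arbitrary $y \in \X$, setting $f := e^{i\lambda \cdot}y$ and $u := R(z, G_{0,\Lambda})f$, and applying the Bohr mean $M_\lambda$ to the identity $(z - G_{0,\Lambda})u = f$ together with the commutation $M_{\lambda, G_{0,\Lambda} u} = (A_0 - i\lambda) M_{\lambda,u}$ (valid because $A_0$ is closed and $M_\lambda$ is a uniform Ces\`aro average), one obtains $(\mu - A_0) M_{\lambda,u} = y$. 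Continuity of $u$ in $y$ then yields a bounded right inverse of $\mu - A_0$, and a parallel injectivity argument rules out kernel, contradicting $\mu \in \sigma(A_0)$. Conversely, for $z \notin \sigma(A_0) - i\Lambda$, sectoriality of $A_0$ combined with closedness of $\Lambda \subset \R$ supplies a uniform bound $K := \sup_{\lambda \in \Lambda}\|R(z + i\lambda, A_0)\| < \infty$; indeed, the sectorial estimate $\|R(\zeta, A_0)\| \leq M/|\zeta|$ off the sector yields decay as $|\lambda| \to \infty$, while continuity of $\lambda \mapsto R(z + i\lambda, A_0)$ on the closed set $\Lambda$ handles the remaining bounded portion. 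I would then define the candidate resolvent on trigonometric polynomials by $R(z, G_{0,\Lambda})\!\Bigl(\sum_k e^{i\lambda_k \cdot} y_k\Bigr) := \sum_k e^{i\lambda_k \cdot} R(z + i\lambda_k, A_0)\, y_k$, show it is bounded by $K$ via Bochner--Fej\'er summability, extend by density to $\AP_\Lambda(\X)$, and verify directly that it inverts $z - G_{0,\Lambda}$.

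The main obstacle is twofold. First, the commutation of $M_\lambda$ with the unbounded operator $G_{0,\Lambda}$ in the first inclusion demands a careful domain argument, since $M_\lambda$ is only a uniform limit of bounded Riemann averages while $G_{0,\Lambda}$ is unbounded. Second, the \emph{Fourier multiplier} construction in the converse direction is subtle because $\AP_\Lambda(\X)$ is not a Hilbert space and Bohr--Fourier series need not converge termwise in norm, so Bochner--Fej\'er polynomial summability is the appropriate substitute. The sectorial hypothesis on $A_0$ is used essentially only to secure the uniform resolvent bound $K$ on the vertical slice $z + i\Lambda$.
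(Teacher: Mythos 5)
The paper does not prove this lemma from scratch: it simply invokes the main admissibility theorem of Murakami--Naito--Minh \cite{murnaimin}, which states that $\mu \in \rho(G_{0,\Lambda})$ if and only if $\sigma(A_0-\mu)\cap i\Lambda=\emptyset$, and then rewrites that condition set-theoretically. Your attempt to reprove the statement directly is a genuinely different route, and its first half is essentially sound: the identity $G_{0,\Lambda}(e^{i\lambda\cdot}y)=e^{i\lambda\cdot}(A_0-i\lambda)y$ together with injectivity of $z-G_{0,\Lambda}$ and the Bohr-mean commutation does give $\sigma(A_0)-i\Lambda\subset\sigma(G_{0,\Lambda})$, modulo the domain argument you correctly flag (which can be carried out on the mild-solution characterization of $D(G_{0,\Lambda})$ given in the paper, using closedness of $A_0$).

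The gap is in the converse inclusion. A uniform bound $K=\sup_{\lambda\in\Lambda}\|R(z+i\lambda,A_0)\|<\infty$ on the symbol does \emph{not} imply that the operator-valued Fourier multiplier $\sum_k e^{i\lambda_k\cdot}y_k\mapsto\sum_k e^{i\lambda_k\cdot}R(z+i\lambda_k,A_0)y_k$ is bounded on the sup-normed space $\AP_\Lambda(\X)$, and Bochner--Fej\'er summability does not repair this. The Bochner--Fej\'er operators are contractions because they are convolutions against a positive kernel of mean one; once you insert a $\lambda$-dependent operator into each Fourier coefficient, that convolution structure is destroyed, and the resulting operators need not be uniformly bounded (already for scalar symbols on $C(\mathbb T)$, a bounded multiplier sequence need not give a bounded operator --- the Riesz projection is the standard counterexample). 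The estimate $\|R_nf\|\le K\|f\|$ would require the symbol $\lambda\mapsto R(z+i\lambda,A_0)$ to be, say, the Fourier--Stieltjes transform of an operator-valued kernel of bounded variation, which holds for $\re z$ large (via the Laplace representation of the resolvent) but is precisely what fails for general $z\notin\sigma(A_0)-i\Lambda$. This is why \cite{murnaimin} does not use a multiplier argument at all but instead treats $G_{0,\Lambda}$ as a sum of the commuting operators $-d/dt$ and the multiplication by $A_0$ and constructs $R(z,G_{0,\Lambda})$ by a Da Prato--Grisvard type contour integral; the sectoriality of $A_0$ enters there in an essential structural way, not merely to secure the bound $K$ as your last sentence suggests. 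To close the gap you would either have to supply that contour-integral construction yourself or, as the paper does, cite \cite{murnaimin} for this direction.
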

\begin{proof}
By the main results of \cite{murnaimin},
\begin{equation*}
\mu \in \rho (G_{0,\Lambda }) \Leftrightarrow \sigma (A_0-\mu) \cap i\Lambda =\emptyset .
\end{equation*}
This condition means there are no complex numbers $\eta \in \sigma (A_0)$ and $\lambda \in \Lambda$ such that $\eta -\mu =i\lambda$, or, $\mu$ cannot be expressed as $\mu =\eta -i\lambda$ with $\eta \in \sigma (A_0)$ and $\lambda \in \Lambda$. In turn, this yields that $\mu\not\in  \sigma (A_0) -i\Lambda $, or, $\mu \in \C \backslash (\sigma (A_0) -i\Lambda )$.
This proves the proposition.
\end{proof}
Recall that we are denoting by $G$ the generator of the evolution semigroup associated with equation (\ref{eq1}).
\begin{proposition}
Assume that $A_0$ is a sectorial operator, and the semi-module $\Lambda$ is a closed subset of the real line. Then, for each compact subset
$$K\subset \rho (G_{0,\Lambda}) =\C \backslash (\sigma (A_0) - i\Lambda)$$ there exists a number $\delta_0>0$ such that if
\begin{equation*}
\sup_{t\in\R} \| A(t)\| <\delta_0,
\end{equation*}
then,
\begin{equation}\label{4.8}
\sigma (G) \cap  K= \emptyset .
\end{equation}
\end{proposition}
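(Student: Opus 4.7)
The plan is to treat this as a standard bounded perturbation of $G_{0,\Lambda}$ by the multiplication operator $M_A$. Recall from the proof of Theorem~\ref{the invariance} that on $\AP_\Lambda(\X)$ one has $G = G_{0,\Lambda} + M_A$, where $M_A$ is bounded with $\|M_A\| \le \sup_{t\in\R}\|A(t)\|$. This places the problem in the classical Neumann-series framework for resolvent perturbation, and the conclusion should follow without any specifically non-autonomous input once the right uniform bound on $K$ is in place.

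First I would apply the preceding lemma to identify $\rho(G_{0,\Lambda}) = \C \setminus (\sigma(A_0) - i\Lambda)$, so that by hypothesis $K \subset \rho(G_{0,\Lambda})$. Since the resolvent map $\mu \mapsto R(\mu, G_{0,\Lambda})$ is holomorphic, and in particular continuous, on the open set $\rho(G_{0,\Lambda})$, and $K$ is compact, the quantity
$$C := \sup_{\mu \in K} \|R(\mu, G_{0,\Lambda})\|$$
is finite. I would then set $\delta_0 := 1/(C+1)$, which is a positive number depending only on $K$.

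Next, assume $\sup_{t\in\R}\|A(t)\| < \delta_0$, and fix an arbitrary $\mu \in K$. I would factor
$$\mu I - G \;=\; (\mu I - G_{0,\Lambda}) - M_A \;=\; (\mu I - G_{0,\Lambda})\bigl(I - R(\mu, G_{0,\Lambda}) M_A\bigr).$$
Since $\|R(\mu, G_{0,\Lambda}) M_A\| \le C\,\|M_A\| < 1$, the operator $I - R(\mu, G_{0,\Lambda})M_A$ is invertible on $\AP_\Lambda(\X)$ by the Neumann series. Consequently $\mu I - G$ is invertible, so $\mu \in \rho(G)$, which proves $\sigma(G) \cap K = \emptyset$.

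The only step that requires any genuine care is the uniform bound $C < \infty$, but as observed above this is an immediate consequence of the holomorphy of the resolvent on $\rho(G_{0,\Lambda})$ combined with the compactness of $K$. The sectoriality of $A_0$ and the closedness of $\Lambda$ have already done their work in the preceding lemma, where they provided the explicit description of $\rho(G_{0,\Lambda})$; beyond that the argument is entirely routine and I do not anticipate any further obstacle.
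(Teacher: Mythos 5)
Your proof is correct and follows essentially the same route as the paper: a uniform bound $\sup_{\mu\in K}\|R(\mu,G_{0,\Lambda})\|<\infty$ from compactness, followed by a Neumann-series perturbation argument with the bounded multiplication operator $M_A$, producing the candidate resolvent $(I-R(\mu,G_{0,\Lambda})M_A)^{-1}R(\mu,G_{0,\Lambda})$. If anything, your factorization $\mu I-G=(\mu I-G_{0,\Lambda})(I-R(\mu,G_{0,\Lambda})M_A)$ is marginally cleaner than the paper's verification, which only checks that this operator is a right inverse of $\mu I-G$.
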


%\begin{remark}
%The condition $\sigma (G) \cap  K= \emptyset \Leftrightarrow \sigma (G) \subset \overline{K} \approx \sigma (A_0) - %i\Lambda$. It means that the spectrum of the generator $G$ is ''close'' to $\sigma (A_0) - i\Lambda$ which is a ''discrete %countable set''. In the case of one dimensional below, we will prove
%\[
%\sigma (G) \subset -i \Lambda \cup i \Lambda.
%\]
%\end{remark}

\begin{proof}
Since $K$ is a compact subset of $\rho (G_{0,\Lambda}) $
\begin{equation*}
\sup_{\lambda \in K} \| R(\lambda , G_{0,\Lambda})\| =\mu <\infty .
\end{equation*}
Let ${\cal A}$ denote the operator of multiplication by $A(t)$ in $\AP_\Lambda (\X)$. Note that this multiplication operator is well defined in $\AP_\Lambda (\X)$, and moreover, this operator is bounded. Therefore, there exists a positive $\delta_0$ such that the operator
\begin{equation*}
(I-R(\lambda ,G_{0,\Lambda}){\cal A})^{-1}
\end{equation*}
whenever $\| {\cal A}\| <\delta_0$ and $\lambda \in K$. Next, we can show that for each $\lambda \in K$
\begin{equation*}
(I-R(\lambda ,G_{0,\Lambda}){\cal A})^{-1} R(\lambda , G_{0,\Lambda}) = R(\lambda , G_{0,\Lambda }+{\cal A})=R(\lambda , G).
\end{equation*}
In fact, set
$$
U:= (I-R(\lambda ,G_{0,\Lambda}){\cal A})^{-1} R(\lambda , G_{0,\Lambda}) .
$$
With this notation we have
\begin{equation*}
(I-R(\lambda ,G_{0,\Lambda }){\cal A}) U = R(\lambda , G_{0,\Lambda }).
\end{equation*}
Therefore,
$$
U=R(\lambda,G_{0,\Lambda}) + R(\lambda,G_{0,\Lambda}) {\cal A}U,
$$
and hence,
$$
(\lambda -G_{0,\Lambda})U=I+{\cal A}U.
$$
This yields
$$
(\lambda -G_{0,\Lambda})U-{\cal A}U=I,
$$
that is,
\begin{equation*}
(\lambda -G_{0,\Lambda}-{\cal A})U=(\lambda -G)U=I.
\end{equation*}
In other words, $U=R(\lambda , G)$ whenever $\lambda \in K$ and $\|{\cal A}\| <\delta_0$. This yields  (\ref{4.8}). The proposition is proved.
\end{proof}

%%%%%%%%%%%%%%%%%%%%%%%%%%%%%%%%%%%%%%%%%%%%%%%%%%%

\subsection{One dimensional case}

As a motivation, we consider the equation
\begin{equation*}
\frac{dx}{dt}= a(t)x,
\end{equation*}
where $a(t)$ is a {\em numerical} almost periodic function taking values in $\X := \R$. Define the operator $G= -d/dt +a(t)$. We will describe the part of spectrum $\sigma (G)\cap i\R$.

%%%%%%%%%%%%%%%%%%%%%%%%%%%%%%%%%%%%%%%%%%%%%%%%%%%

\begin{theorem}\label{new}
Suppose that
\begin{itemize}
\item[(i)] the semi-module generated by the Bohr spectrum of $a(t)$, i.e. $\Lambda := sm (\sigma_b (a))$, is a discrete countable set and
\item[(ii)] $0 \not \in \sigma_b (a)$.
\end{itemize}
Then  $\Sigma$
is also a discrete countable set. Moreover
\[
\Sigma \subset -i \Lambda \cup i\Lambda.
\]
\end{theorem}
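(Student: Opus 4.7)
\textbf{Plan for Theorem \ref{new}.}

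The idea is to conjugate $G = -\tfrac{d}{dt} + M_a$ acting on $\AP_\Lambda(\X)$ by a bounded, invertible multiplication operator so as to reduce it to the translation generator $G_{0,\Lambda} = -\tfrac{d}{dt}$ of $\dot u = 0$, whose spectrum has been computed by the lemma in the preceding part of this section. The problem of locating $\Sigma$ thereby becomes a direct application of that spectral formula.

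First I would construct an almost periodic primitive of $a$. Because $\Lambda$ is discrete and $0 \notin \sigma_b(a)$, the set $\sigma_b(a) \subset \Lambda \setminus \{0\}$ is bounded away from $0$, and the classical Bohr theorem on primitives of almost periodic functions whose spectrum is separated from the origin (see, e.g., \cite{levzhi}) guarantees that $\phi(t) := \int_0^t a(\xi)\,d\xi$ lies in $\AP(\X)$ with $\sigma_b(\phi) \subset \sigma_b(a) \subset \Lambda$. Since $\Lambda$ is closed under addition, $\AP_\Lambda$ is a Banach subalgebra of $\AP$ under pointwise multiplication, so each power $\phi^n$ lies in $\AP_\Lambda$ and the series $e^{\pm\phi} = \sum_{n\ge 0}(\pm\phi)^n/n!$ converges uniformly to an element of $\AP_\Lambda$. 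The multiplication operators $M_{e^{\pm\phi}}$ are therefore bounded and mutually inverse on $\AP_\Lambda$, and a direct computation using $\phi' = a$ yields the intertwining identity
\[
(G - \lambda I)\, M_{e^\phi} \;=\; M_{e^\phi}\, (G_{0,\Lambda} - \lambda I), \qquad \lambda \in \C,
\]
on the common domain $D(G) = D(G_{0,\Lambda})$. Consequently $G$ and $G_{0,\Lambda}$ are similar on $\AP_\Lambda$, hence $\sigma(G) = \sigma(G_{0,\Lambda})$.

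To conclude I would invoke the spectral lemma proved earlier in this section: with $A_0 = 0$ (trivially sectorial) and $\Lambda$ closed (being a discrete countable subset of $\R$), one has $\sigma(G_{0,\Lambda}) = \sigma(A_0) - i\Lambda = -i\Lambda$. Combined with the previous paragraph, $\Sigma = \sigma(G) \cap i\R = -i\Lambda$, which is discrete and countable by hypothesis and is trivially contained in $-i\Lambda \cup i\Lambda$.

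\textbf{Main obstacle.} The substantive point is the very first step, namely verifying that the primitive $\phi$ actually lies in $\AP_\Lambda$. Approximating $a$ uniformly by trigonometric polynomials (e.g.\ by Bochner--Fej\'er means with spectrum in $\sigma_b(a)$) and integrating term by term does not in general give uniform convergence of the antiderivatives, so one must appeal to the classical Bohr theorem for primitives of AP functions whose spectrum is bounded away from $0$; it is exactly here that the two hypotheses $0\notin\sigma_b(a)$ and the discreteness of $\Lambda$ are both used in an essential way. The remaining steps---the intertwining calculation and the spectrum identification---are routine, the latter being an immediate application of a result already established in the paper.
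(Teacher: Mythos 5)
Your argument is correct in its essentials, but it takes a genuinely different route from the paper's. The paper proves the inclusion $\Sigma\subset -i\Lambda\cup i\Lambda$ by directly verifying, for each real $\lambda\notin -\Lambda\cup\Lambda$, that the resolvent equation $\dot x=(a(t)-i\lambda)x+f(t)$ has a unique solution in $\AP_\Lambda(\R)$: after the substitution $y=e^{i\lambda t}x$ it writes down the variation-of-constants formula and applies Bohr's theorem on primitives (Lemma \ref{Theorem4.12_Fink}) a second time, now to $e^{i\lambda\tau}f(\tau)e^{-\int_0^\tau a(s)ds}$, whose Bohr spectrum lies in $\lambda+\Lambda$ and is bounded away from zero. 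You instead conjugate the whole evolution semigroup by $M_{e^\phi}$ with $\phi=\int_0^\cdot a(s)ds$, reduce $G$ to the translation generator $G_{0,\Lambda}$, and read off the spectrum from the earlier lemma. Both arguments hinge on exactly the same key fact --- Lemma \ref{new2}, i.e.\ $e^{\pm\int_0^\cdot a(s)ds}\in\AP_\Lambda(\R)$, which is where hypotheses (i) and (ii) enter --- but your version is shorter, avoids the second application of Bohr's theorem, and yields the sharper conclusion $\Sigma=-i\Lambda$ instead of the stated inclusion. That sharper statement is consistent: for every $\nu$ with $-\nu\in\Lambda$ the function $e^{\phi(t)-i\nu t}$ lies in $\AP_\Lambda(\R)$ and is an eigenfunction of $G$ with eigenvalue $i\nu$ (one checks the defining integral identity directly), so $-i\Lambda\subset\Sigma$ holds in any case.

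Three small points to tighten. First, a discrete countable subset of $\R$ need not be closed (consider $\{1/n:n\ge 1\}$), so your parenthetical justification for invoking the formula $\sigma(G_{0,\Lambda})=\sigma(A_0)-i\Lambda$ is incomplete; either argue that a discrete semi-module is closed, or observe that you only need the one-sided statement $\rho(G_{0,\Lambda})\supset\C\setminus(-i\Lambda)$, which follows from the criterion of \cite{murnaimin} used in that lemma's proof. Second, $\sigma_b(\phi)\subset\sigma_b(a)\cup\{0\}$ rather than $\sigma_b(\phi)\subset\sigma_b(a)$, since the primitive may acquire a nonzero mean; this is harmless because $0\in\Lambda$. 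Third, since the paper defines $G$ through the integral equation rather than as a differential operator, the intertwining is most safely established at the semigroup level via the identity $T^h=M_{e^\phi}T_0^hM_{e^{-\phi}}$ on $\AP_\Lambda(\R)$, which is immediate from $U(t,s)=e^{\phi(t)-\phi(s)}$; similarity of the generators and equality of the spectra then follow without any formal manipulation of domains.
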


%%%%%%%%%%%%%%%%%%%%%%%%%%%%%%%%%%%%%%%%%%%%%%%%%%%

In order to prove Theorem \ref{new}, we need the following two lemmas.

%%%%%%%%%%%%%%%%%%%%%%%%%%%%%%%%%%%%%%%%%%%%%%%%%%%

%%%%%%%%%%%%%%%%%%%%%%%%%%%%%%%%%%%%%%%%%%%%%%%%%%%

\begin{lemma}\label{Theorem4.12_Fink}
Let $f \in \AP (\R)$ such that $| \lambda_n | \ge M > 0$ for all $\lambda_n \in \sigma_b (f)$. Then $g(\cdot) := \int_0^{\cdot} f(s)ds \in \AP (\R)$ and $\sigma_b (g) \subset \sigma_b (f) \cup \{0\}$.
\end{lemma}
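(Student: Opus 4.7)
The plan is to use Bohr's Approximation Theorem to write $f$ as a uniform limit of trigonometric polynomials, exploit the spectral gap $|\lambda|\ge M>0$ to integrate these polynomials term by term, and show that the resulting antiderivatives converge uniformly on $\R$.

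Concretely, by the Approximation Theorem \cite[Theorem 1.19, p.\ 27]{hinnaiminshi} there exist trigonometric polynomials
\[
P_n(t)=\sum_{k=1}^{N_n} c_{n,k}\,e^{i\lambda_{n,k}t},\qquad \lambda_{n,k}\in\sigma_b(f),
\]
with $P_n\to f$ uniformly on $\R$. Since $|\lambda_{n,k}|\ge M>0$, the formal antiderivatives
\[
Q_n(t):=\sum_{k=1}^{N_n}\frac{c_{n,k}}{i\lambda_{n,k}}\,e^{i\lambda_{n,k}t}
\]
are again trigonometric polynomials with exponents in $\sigma_b(f)$ and satisfy $Q_n'=P_n$. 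Once a uniform limit $\tilde g:=\lim Q_n$ is in hand, $\tilde g\in \AP(\R)$ with $\sigma_b(\tilde g)\subset\sigma_b(f)$; the joint uniform convergences $Q_n\to\tilde g$ and $Q_n'=P_n\to f$ force $\tilde g'=f$, so $g=\tilde g+c$ for a constant $c=g(0)-\tilde g(0)$, giving both $g\in \AP(\R)$ and $\sigma_b(g)\subset\sigma_b(f)\cup\{0\}$.

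The hard step is proving that $(Q_n)$ is Cauchy in $BC(\R,\R)$. A direct coefficientwise bound $\|Q_n-Q_m\|_\infty\le M^{-1}\sum_k|c_{n,k}-c_{m,k}|$ is too crude, because the Bohr--Fourier coefficients of an almost periodic function need not be absolutely summable. I would therefore take the $P_n$ to be Bochner--Fej\'er polynomials, with $c_{n,k}=K_{n,k}\,a(\lambda_k)$ for Bohr--Fourier coefficients $a(\lambda_k)=M_{\lambda_k,f}$ and universal multipliers $K_{n,k}\in[0,1]$ tending to $1$; then $(Q_n)$ is precisely the Bochner--Fej\'er sequence of the formal series $\sum_\lambda (a(\lambda)/(i\lambda))\,e^{i\lambda t}$, whose Parseval norm is dominated by $M^{-1}(\sum_\lambda|a(\lambda)|^2)^{1/2}<\infty$, and standard Bochner--Fej\'er summability provides the required uniform limit. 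A cleaner alternative is to construct an $L^1$-kernel $\psi$ on $\R$ with $\hat\psi(\lambda)=1/(i\lambda)$ for $|\lambda|\ge M$ (which exists thanks to the spectral gap: take a smooth cut-off away from $0$ and use its inverse Fourier transform); then $\psi*f$ is almost periodic (convolution of $L^1$ with $\AP$), has Bohr spectrum contained in $\sigma_b(f)$, and a direct computation on trigonometric approximants shows $(\psi*f)'=f$, so $g=\psi*f+\mathrm{const}$, from which both conclusions follow.
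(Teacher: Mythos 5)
Your primary route (Bochner--Fej\'er) has a genuine gap at exactly the step you flag as hard. Finiteness of the Parseval norm $M^{-1}\bigl(\sum_\lambda |a(\lambda)|^2\bigr)^{1/2}$ only gives convergence of the sums $Q_n$ in the Besicovitch $B^2$ seminorm, not uniformly on $\R$; the $B^2$ limit need not have a bounded, let alone Bohr almost periodic, representative. The underlying obstruction is that multiplying the Bohr--Fourier coefficients of an almost periodic function by a bounded sequence (here $1/(i\lambda)$, bounded by $1/M$ on the spectrum) does not in general return the Fourier series of a Bohr almost periodic function --- already in the periodic case, bounded coefficient multipliers do not map $C(\mathbb{T})$ into $C(\mathbb{T})$. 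Bochner--Fej\'er summability yields \emph{uniform} convergence precisely when the target is already known to be Bohr almost periodic, so invoking it here is circular. Some extra input is unavoidable: either Bohr's theorem that a bounded indefinite integral of an almost periodic function is almost periodic (together with a separate proof of boundedness), or the fact that $1/(i\lambda)$ restricted to $\{|\lambda|\ge M\}$ extends to an element of the Fourier algebra $\mathcal{F}L^1(\R)$.

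Your ``cleaner alternative'' supplies exactly that input and is correct: with $m(\lambda)=\chi(\lambda)/(i\lambda)$ for a smooth cut-off $\chi$ vanishing near $0$ and equal to $1$ on $\{|\lambda|\ge M\}$, both $m$ and $m'$ lie in $L^2(\R)$, so $m=\hat\psi$ for some $\psi\in L^1(\R)$; then $\psi*f\in\AP(\R)$ with $\sigma_b(\psi*f)\subset\sigma_b(f)$, and checking $(\psi*P_n)'=P_n$ on trigonometric approximants $P_n$ with exponents in $\sigma_b(f)$ and passing to the uniform limit gives $(\psi*f)'=f$, hence $g=\psi*f+\mathrm{const}$ and both conclusions. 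This kernel argument is in substance the standard proof of the result the paper relies on: the paper does not prove the lemma from scratch at all, but simply cites Fink's Theorem 4.12 (almost periodicity of the integral when the spectrum is bounded away from zero) and Theorem 5.2 (module containment for almost periodic solutions of $x'=f$, which yields $\sigma_b(g)\subset\sigma_b(f)\cup\{0\}$). So your second route is a legitimate self-contained replacement; the first route, as written, is not.
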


%%%%%%%%%%%%%%%%%%%%%%%%%%%%%%%%%%%%%%%%%%%%%%%%%%%

\begin{proof}

This Lemma is a direct consequence of \cite[Theorem 4.12]{Fink} and \cite[Theorem 5.2]{Fink}. Indeed, \cite[Theorem 4.12]{Fink} stated that $g(t) = \int_0^t f(s)ds$ is an almost periodic function. Therefore $g(t)$ is an almost periodic solution to equation $x'(t) = f(t)$, and by \cite[Theorem 5.2]{Fink}, $\sigma_b (g) \subset \sigma_b (f) \cup \{0\}.$
\end{proof}

%%%%%%%%%%%%%%%%%%%%%%%%%%%%%%%%%%%%%%%%%%%%%%%%%%%

\begin{lemma}\label{new2}
$e^{\int_0^{\cdot} a(s) ds} \in \AP _{\Lambda}(\R)$.
\end{lemma}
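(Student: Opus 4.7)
The plan is to establish this in three steps: first construct an almost periodic antiderivative $A(t)$ of $a(t)$ with spectrum in $\Lambda$, then show $e^{A(t)}$ is the uniform limit of trigonometric-like approximations living in $\AP_\Lambda(\R)$, and finally invoke closedness of $\AP_\Lambda(\R)$ in $\AP(\R)$.

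First I would define $A(t) := \int_0^t a(s)\, ds$ and verify $A \in \AP_\Lambda(\R)$. Since $\Lambda$ is discrete and $0 \in \Lambda$, the point $0$ is isolated in $\Lambda$, so there exists $M > 0$ with $\Lambda \cap (-M, M) = \{0\}$. Because $\sigma_b(a) \subset \Lambda$ and $0 \notin \sigma_b(a)$ by hypothesis (ii), we obtain $|\lambda| \ge M$ for every $\lambda \in \sigma_b(a)$. Lemma \ref{Theorem4.12_Fink} then applies and yields $A \in \AP(\R)$ with $\sigma_b(A) \subset \sigma_b(a) \cup \{0\} \subset \Lambda$ (noting $0 \in \Lambda$ by the definition of a semi-module). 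So $A \in \AP_\Lambda(\R)$, and in particular $A$ is bounded: $\|A\|_\infty =: K < \infty$.

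Next I would pass from $A$ to $e^A$ using the Taylor expansion $e^{A(t)} = \sum_{n=0}^\infty \frac{A(t)^n}{n!}$. The key observation is that $\AP_\Lambda(\R)$ is an algebra: if $f,g \in \AP_\Lambda(\R)$, the Fourier–Bohr coefficients of $fg$ are supported in $\sigma_b(f) + \sigma_b(g) \subset \Lambda + \Lambda \subset \Lambda$ since $\Lambda$ is a semi-module, and the product of almost periodic functions is almost periodic. Consequently $A^n \in \AP_\Lambda(\R)$ for each $n \ge 0$, and the partial sums
\begin{equation*}
S_N(t) := \sum_{n=0}^N \frac{A(t)^n}{n!}
\end{equation*}
all lie in $\AP_\Lambda(\R)$.

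Finally, the bound $\|A\|_\infty \le K$ gives $\|e^{A} - S_N\|_\infty \le \sum_{n=N+1}^\infty K^n/n! \to 0$, so $S_N \to e^{A}$ uniformly on $\R$. Since $\AP_\Lambda(\R)$ is a closed subspace of $\AP(\R)$ (as noted right after its definition in the preliminaries), the uniform limit $e^{A(\cdot)}$ belongs to $\AP_\Lambda(\R)$, completing the proof. The only mildly delicate point is the first step: one must use both the discreteness of $\Lambda$ at $0$ and the hypothesis $0 \notin \sigma_b(a)$ to secure the uniform lower bound $|\lambda| \ge M$ needed for Lemma \ref{Theorem4.12_Fink}; everything afterwards is a routine algebra-and-limit argument in $\AP_\Lambda(\R)$.
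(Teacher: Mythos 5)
Your proposal is correct and follows essentially the same route as the paper: integrate $a$ via Lemma \ref{Theorem4.12_Fink} (using that $\sigma_b(a)$ is bounded away from zero), expand $e^{A(t)}$ as a uniformly convergent power series whose terms lie in $\AP_\Lambda(\R)$ because $\Lambda$ is a semi-module, and conclude by closedness of $\AP_\Lambda(\R)$. If anything, your justification that $\sigma_b(a)$ is bounded away from zero (via $0$ being an isolated point of the discrete set $\Lambda$) is slightly more careful than the paper's one-line assertion.
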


%%%%%%%%%%%%%%%%%%%%%%%%%%%%%%%%%%%%%%%%%%%%%%%%%%%

\begin{proof} First, note that if we replace $sm(\sigma_b (a))$ with $m(\sigma_b (a))$ - the module generated by $\sigma_b (a)$, then the claim is clear from \cite[Theorem 1.9, p.5]{Fink}. However, in general $sm(\sigma_b (a))$ may differ from $m(\sigma_b (a))$.
Since $sm (\sigma_b (a))$ is a discrete set, so is $\sigma_ b(a)$. From the assumption $0 \not \in \sigma_b (a)$, we get that $\sigma_b (a)$ is bounded away from zero. Applying Lemma \ref{Theorem4.12_Fink},
\[
g(\cdot) := \int_0^{\cdot} a(t)dt \in \AP_{\Lambda} (\R).
\]
 We have
\[
e^{g(t)} = 1 + g(t) + \frac{g^2 (t)}{2} + \frac{g^3 (t)}{3!} + \cdots + \frac{g^n(t)}{n!} + \cdots
\]
Since $g \in \AP _{\Lambda} (\R)$ we have that $g$ is bounded, so the above infinite sum is uniformly convergent.
Also $g^n \in \AP _{\Lambda} (\R)$ due to the fact that $\AP_{\Lambda} (\R)$ is closed under products (see \cite[Theorem 1.9, p.\ 5]{Fink}) and $\sigma_b(g^n) \in \Lambda$ as an application of the Approximation Theorem.
Since the uniform limit of a sequence of almost periodic functions is also an almost periodic function, $e^{g(\cdot)} \in \AP (\R)$. On the other hand, $\AP _{\Lambda}(\R )$ is a closed subspace of $\AP (\R)$. Therefore, $e^{g(\cdot)} \in \AP_{\Lambda} (\R)$.
\end{proof}

%%%%%%%%%%%%%%%%%%%%%%%%%%%%%%%%%%%%%%%%%%%%%%%%%%%

\begin{proof}[Proof of theorem \ref{new}]
It is sufficient to prove that for every real number $\lambda \not \in -\Lambda \cup \Lambda$, the equation
\begin{equation}\label{one.dim.eq1}
\frac{dx}{dt} = (a(t) -i\lambda) x +f(t)
\end{equation}
has a unique solution $x \in \AP _{\Lambda} (\R)$ for every $f \in \AP_{\Lambda} (\R)$.

Let $y(t) = e^{i \lambda t} x(t)$ or $x(t) = e^{-i\lambda t} y(t)$, then (\ref{one.dim.eq1}) becomes
\begin{equation}\label{eq_new}
\frac{dy}{dt} =  a(t) y + e^{i \lambda t} f(t),
\end{equation}
which has a general solution
\[
y(t) = e^{\int_0^t a(s) ds}  \left ( \int_0^t e^{i \lambda \tau} f(\tau) e^{-\int_0^{\tau}a(s) ds} d\tau + C_0 \right ).
\]
By applying Lemma \ref{new2}, one has $f(\cdot) e^{-\int_0^{\cdot}a(s) ds} \in \AP _{\Lambda}(\R)$. Now suppose that
\[
\lambda_n \in \sigma_b \left (e^{i \lambda \tau} f(\tau) e^{-\int_0^{\tau}a(s) ds}\right ),
\]
then $\lambda_n = \lambda + \mu_n$ for some $\mu_n \in \Lambda$. Since $\lambda \not \in -\Lambda \cup \Lambda$, it follows that $\lambda _n \ne 0$ for all $n$. Because $\Lambda$ is a discrete set, $e^{i \lambda \tau} f(\tau) e^{-\int_0^{\tau}a(s) ds}$ is an almost periodic function with Bohr spectrum bounded away from zero.

By applying Lemma \ref{Theorem4.12_Fink}, it follows that $G(\cdot) = \int_0^{\cdot} e^{i \lambda \tau} f(\tau) e^{-\int_0^{\tau}a(s) ds} d\tau \in \AP (\R)$. In fact $G \in \AP_{\Lambda + \{\lambda\}} (\R)$, where $\Lambda + \{\lambda\}$ is the set of all real numbers $\mu$ of the form
\[
\mu := m + \lambda, \quad m \in \Lambda.
\]
Note that $\Lambda + \{\lambda\} \not = \Lambda$, since $\lambda \not \in \Lambda$.
 So $y \in \AP_{\Lambda + \{\lambda\}} (\R)$ and therefore $x(\cdot) = e^{-i\lambda \cdot} y(\cdot) \in \AP _{\Lambda} (\X)$  which completes the proof.
\end{proof}

%%%%%%%%%%%%%%%%%%%%%%%%%%%%%%%%%%%%%%%%%%%%%%%%%%%

\begin{remark}
The condition $0 \not \in \sigma_b (a)$ is essential. Otherwise the solution
\[
y(t) = e^{\int_0^t a(s) ds}  \left ( \int_0^t e^{i \lambda \tau} f(\tau) e^{-\int_0^{\tau}a(s) ds} d\tau + C_0 \right )
\]
is even unbounded. For instance, choose $a(t) := 1 + e^{it}$, then $\int_0^t a(s) ds = t + \frac{e^{it}}{i}$ is unbounded.
\end{remark}

%%%%%%%%%%%%%%%%%%%%%%%%%%%%%%%%%%%%%%%%%%%%%%%%%%%

\begin{example}
Consider the equation
\begin{equation}\label{example}
\frac{dx(t)}{dt}=(e^{it}+e^{i\sqrt{2}t})x(t), \quad x(t)\in \C, t\in \R .
\end{equation}
The frequencies of $a(t):=(e^{it}+e^{i\sqrt{2}t})$ are $\{ 1,\sqrt{2}\}$. The semi-module generated by the set of frequencies of $a(t)$ is the set $\N_0 +\sqrt{2}\N_0$. With the operator ${G}= -d/dt +a(t)$ one gets
\[
 \sigma (G) \cap i\R \subset -i (\N _0 + \N_0 \sqrt{2}) \cup i (\N _0 + \N_0 \sqrt{2}),
\]
where $\N_0 = \{0, 1, 2, \ldots\}$. In this example $U(t, s) = e^{\int_s^t a(\tau )d\tau }$, so all solutions $x(t) = U(t, s) x(s)$ are almost periodic, and therefore bounded.

\end{example}

\begin{remark}
 The discreteness of $sm (\sigma_b (a))$ is essential. In one dimensional case, the semi-module generated by the Bohr spectrum of all periodic functions are discrete. This assertion also holds for all almost periodic functions that either have discrete positive spectrums or discrete negative spectrums; for example
\[
 a(t) = c_1 e^{i \lambda_1} + c_2 e^{i\lambda_2} + \cdots + c_m e^{i\lambda_m},
\]
where $\lambda_1 < \lambda_2 < \cdots < \lambda_m < 0$ or $0 < \lambda_1 < \lambda_2 < \cdots < \lambda_m$.
Meanwhile with almost periodic functions that have both negative and positive spectrums, their semi-module generated by the Bohr spectrums may be not discrete. For example, if
\[
 a(t) = e^{-it} + e^{i\sqrt{2}t},
\]
then $sm (\sigma_b(a)) = \{-m + n\sqrt{2}| m, n \in \N_0\}$ is not discrete since $-m + n\sqrt{2}$ can be closed to zero with arbitrary distance.

\end{remark}
\begin{remark}
 Theorem \ref{new} also holds if $\X$ is any Banach space. In fact, Lemma \ref{Theorem4.12_Fink} and Lemma \ref{new2} work well in this case.
\end{remark}

\begin{remark}
 Theorem \ref{new} holds not only for one dimensional case, but also for finite dimensional case, i.e, for equation
\[
 \frac{dx}{dt} = A(t)x,
\]
where $A(t)$ is an almost periodic matrix and $\X = \R^n$ or $\C^n$.
\end{remark}

\begin{example}
 In the following we give a numerical example to illustrate the conditions required in our Theorem \ref{the main}. The equation we consider is of the form
\begin{equation}\label{example2}
 \frac{dx(t)}{dt} = (\cos t + \cos t\sqrt{2} - 2)x(t), \quad x(t)\in \R , t\in \R_+.
\end{equation}
\end{example}

The frequencies of $a(t):=\cos t + \cos t\sqrt{2} - 2$ are $\{0, -1, 1,-\sqrt{2}, \sqrt{2}\}$, therefore $\Lambda := sm (\sigma_b (a)) = m (\sigma_b (a)) = \Z + \Z \sqrt{2}$. We will show that 
\[
 \Sigma = i\Lambda.
\]

We have $M_{0,a} := \lim\limits_{T \to \infty}\frac{1}{T} \int_0^T a(t)dt = -2$, so $\re M_{0,a} = -2 \not = 0$. It is shown in \cite[Theorem 6.6]{Fink} that for all $\lambda \not \in \Lambda$, the equation 
\[
 \frac{dx}{dt} = (a(t) -i\lambda )x + f(t) ,
 \]
 or with $ y = e^{i\lambda t}x$,
 \[
 \frac{dy}{dt} = a(t)y + e^{i\lambda t}f(t) 
\]

 has a unique bounded solution
\[
 y(t) = -\int_t^{\infty} e^{\int_s^t a(\tau ) d\tau} e^{i\lambda s}f(s)ds 
\]
which is almost periodic and in $\AP_{\Lambda + \lambda}(\X)$. It yields that $\Sigma \subset i\Lambda$. 
We now verify the condition (\ref{ergo con}). Since $U(t, s) = e^{\int_s^t a(\tau ) d\tau}$, (\ref{eq 3.16}) becomes (for $s=0$)
\[
 u_{\lambda, f} = e^{-\lambda t} e^{\int_0^t a (\tau ) d\tau} u(0) + \int_0^t e^{-\lambda (t-\xi)}e^{\int_{\xi}^t a(\tau )d\tau} f(\xi) d\xi.
\]
Therefore for each $i\lambda \in \Sigma$,
\begin{equation*}
\begin{split}
 u_{\alpha + i\lambda, f} & = e^{-(\alpha + i\lambda) t} e^{\int_0^t a (\tau ) d\tau} u(0) + \int_0^t e^{-(\alpha + i\lambda) (t-\xi)}e^{\int_{\xi}^t a(\tau )d\tau} f(\xi) d\xi \\
& = e^{-(2+\alpha)t}e^{-i\lambda t + \sin t + \frac{\sin \sqrt{2}t}{\sqrt{2}}} \left [ u(0) + \int_0^t e^{i\lambda \xi - \sin \xi - \frac{\sin \sqrt{2}\xi}{\sqrt{2}}}f(\xi)e^{2\xi} d\xi \right ]
\end{split}
\end{equation*}
Since $e^{-i\lambda t + \sin t + \frac{\sin \sqrt{2}t}{\sqrt{2}}}$ and $e^{i\lambda \xi - \sin \xi - \frac{\sin \sqrt{2}\xi}{\sqrt{2}}}f(\xi)$ are almost periodic in  $t$ and $\xi$, respectively, there exist positive constants $M$ and $N$ such that
\[
 |e^{-i\lambda t + \sin t + \frac{\sin \sqrt{2}t}{\sqrt{2}}}| \le M,\quad |e^{i\lambda \xi - \sin \xi - \frac{\sin \sqrt{2}\xi}{\sqrt{2}}}f(\xi)| \le N.
\]
We have
\begin{equation*}
 \begin{split}
 |u_{\alpha + i\lambda, f}| & \le e^{-(2+\alpha)t}M \left [u_0 + N \int_0^t e^{2\xi}d\xi \right ] \\
& \le e^{-(2+\alpha)t}M u_0 +  e^{-(2+\alpha)t}MN\frac{e^{2t}-1}{2} \\
&\le Mu_0 + \frac{MN}{2}(e^{-\alpha t} - e^{-(2+\alpha)t}) \\
& \le Mu_0 + 2MN.
 \end{split}
\end{equation*}
Therefore $\lim\limits_{\alpha \downarrow 0}\alpha u_{\alpha + i\lambda, f} = 0$. Obviously, the set $\Sigma$ is countable, so according to Theorem \ref{the main}, the equation (\ref{example2}) is strongly stable.

\section*{Acknowledgments}

The first author was supported by Vietnam MOET grant No. 322/5179/QD-BGDDT. The second author was supported by DFG under grant number Si801/6-1 and NAFOSTED under grant number 101.02-2011.47.

%%%%%%%%%%%%%%%%%%%%%%% REFERENCES %%%%%%%%%%%%%%%%%%%%%%%%%%
\bibliographystyle{amsplain}

\begin{thebibliography}{10}

\bibitem{arebathieneu}
W. Arendt, C.J.K.  Batty, M. Hieber, F. Neubrander, {\it
Vector-valued Laplace transforms and Cauchy problems}, Monographs
in Mathematics, {\bf 96}, Birkh\"{a}user Verlag, Basel, 2001.

\bibitem{arebat} W. Arendt, C.J.K. Batty,
Asymptotically almost periodic solutions of inhomogeneous Cauchy problems on the half-line, {\it Bull. London Math. Soc.}, {\bf  31} (1999), 291--304.


\bibitem{balgolpar}
M. E. Ballotti, J. A. Goldstein, M. E. Parrott, 
Almost periodic solutions of evolution equations. 
{\it J. Math. Anal. Appl.} {\bf 138} (1989), 522-536.

\bibitem{bas}
B. Basit, Harmonic analysis and asymptotic behavior of solutions
to the abstract Cauchy problem. {\it Semigroup Forum}, {\bf 54}
(1997),  58--74.

\bibitem{batneerab} C. J. K. Batty, J. Van Neerven, F. R{\"{a}}biger, Local spectra and individual stability of uniform bounded $C_0$-semigroups. {\it Trans. A.M.S.} {\bf 350} (1998), 2071--2085.


\bibitem{chilat} C. Chicone and Yu. Latushkin, Evolution Semigroups in Dynamical Systems and Differential Equations, Math. Surveys Monogr., {\bf 70}, American Mathematical Society, Providence, RI, 1999.

\bibitem{chitom}
R. Chill, Y. Tomilov, Stability of operators semigroups: ideas and
results. In "Perspectives in Operator Theory". Banach Center
Publications, Vol. {\bf 75} (2007), 71--109.

\bibitem{elljoh}
R. Ellis, R. A. Johnson, 
Topological dynamics and linear differential systems.
{\it J. Differential Equations}, {\bf  44} (1982),  21-39.


\bibitem{eststrzou}
J. Esterle, E. Strouse, F. Zouakia, Stabilit\'e asymptotique de
certains semigroupes d'op\'erateurs et id\'eaux primaires de
$L^1(\Bbb{R}_+)$. {\it J. Operator Theory} {\bf 28} (1992),
203--227.

\bibitem{eststrzou2}
J. Esterle, E. Strouse, F. Zouakia,  Theorems of Katznelson-Tzafriri type for contractions. {\it J. Funct. Anal.}  {\bf 94} (1990), 273--287.



\bibitem{Fink} A. M. Fink,
{\it Almost Periodic Differential Equations}, Springer-Verlag, Berlin - Heidelberg - New York, 1974.

\bibitem{hinnaiminshi}
Y. Hino, T. Naito, Nguyen Van Minh, J. S. Shin, {\it Almost
Periodic Solutions of Differential Equations in Banach Spaces}.
Taylor \& Francis, London - New York, 2002.


\bibitem{johsel}
R. A. Johnson, G. R. Sell, 
Smoothness of spectral subbundles and reducibility of quasiperiodic linear differential
systems.
{\it J. Differential Equations},  {\bf 41} (1981), 262-288.

\bibitem{huy1}
Nguyen Thieu Huy, Exponential dichotomy of evolution equations and admissibility of function spaces on a half-line. {\it J. Funct. Anal.} {\bf  235} (2006), 330--354.

\bibitem{huy2}
Nguyen Thieu Huy, Exponentially dichotomous operators and exponential dichotomy of evolution equations on the half-line. {\it Integral Equations Operator Theory}, {\bf 48} (2004),  497--510.

\bibitem{levzhi} B. M. Levitan, V. V.  Zhikov,
{Almost Periodic Functions and
Differential Equations}, Moscow Univ. Publ. House 1978. English
translation by Cambridge University Press 1982.

\bibitem{min}
Nguyen Van Minh, Asymptotic behavior of individual orbits of discrete systems.  {\it Proc. Amer. Math. Soc.} {\bf 137}, No. 9 (2009), 3025--3035.

\bibitem{min3}
Nguyen Van Minh, A new approach to the spectral theory of
functions and the Loomis-Arendt-Batty-Vu Theory. Submitted. Preprint in {\it
ArXiv.org} at the URL: {\it http://arxiv.org/abs/math.FA/0609652}

\bibitem{minrabsch}
Nguyen Van Minh, F. R\"abiger, R. Schnaubelt, { On the
exponential stability, exponential expansiveness and exponential dichotomy
of evolution equations on the half line}, {\it Int.\ Eq.\ and Oper.\ Theory}
{\bf 32} (1998), 332--353.



\bibitem{murnaimin}
S. Murakami, T. Naito, Nguyen Van Minh, {Evolution semigroups and
sums of commuting operators: A new approach to the admissibility
theory of function spaces}, {\it J. Differential Equations}, {\bf
164} (2000), 240--285.

\bibitem{murnaimin2}
S. Murakami, T. Naito, Nguyen Van Minh, Massera's theorem for
almost periodic solutions of
 functional differential equations. {\it J. Math. Soc. Japan} {\bf 56} (2004), 247--268.

\bibitem{naimin} T. Naito, Nguyen Van Minh,  Evolutions semigroups and spectral
criteria for almost periodic solutions  of periodic evolution
equations. {\it Journal of Differential Equations}, {\bf 152}
(1999), 358--376.


\bibitem{naiminshi} T. Naito, Nguyen Van Minh, J. S. Shin, {New spectral criteria for
almost periodic solutions of evolution equations}. {\it Studia
Mathematica} {\bf 145} (2001), 97--111.




\bibitem{nee}
J. M. A. M. van Neerven,  {\it The asymptotic Behaviour of
Semigroups of Linear Operator}, Birkh\"{a}user Verlag.
Basel - Boston - Berlin, Operator Theory, Advances and Applications
Vol.\ {\bf 88}, 1996.



\bibitem{pazy} A. Pazy,
{\it Semigroups of Linear Operators and Applications to Partial Differential Equations}, Springer-Verlag, New York, 1983.


\bibitem{pre}
C. Preda,  $(L^p(\R_+,X),L^q(\R_+,X))$-admissibility and exponential dichotomies of cocycles. {\it J. Differential Equations}, {\bf 249} (2010), 578--598.

\bibitem{pre2}
P. Preda, A. Pogan,  C. Preda,  Sch\"affer spaces and exponential dichotomy for evolutionary processes. {\it J. Differential Equations}, {\bf 230} (2006),  378--391.

\bibitem{pre3}
C. Preda, P. Preda, A. Craciunescu, Criterions for detecting the existence of the exponential dichotomies in the asymptotic behavior of the solutions of variational equations. {\it J. Funct. Anal.}, {\bf 258} (2010), 729--757.

\bibitem{pru}
J. Pr\"uss, {\it Evolutionary Integral Equations and
Applications}. Birkh\"auser, Basel, 1993.

\bibitem{rei}
H. J. Reiter, Investigations in harmonic analysis. {\it Trans.
Amer. Math. Soc.} {\bf 73} (1952), 401--427.

\bibitem{sacsel}
R. J. Sacker, G. R.  Sell, 
A spectral theory for linear differential systems. 
{\it J. Differential Equations}, {\bf  27} (1978),  320-358.





\bibitem{sheyi}
W. Shen, Y. Yi, {\it Almost Automorphic and Almost Periodic
Dynamics in Skew-Product Semiflows}. Memoirs of the Amer. Math.
Soc. {\bf 136}, 1998.

\bibitem{sklshi}
G. M. Sklyar, V. Ya. Shirman, Asymptotic stability of a linear
differential equation in a Banach space. (Russian) {\it Teor. Funktsi\u\i \
Funktsional. Anal. i Prilozhen.} {\bf 37} (1982), 127--132.



\bibitem{vulyu}
 Vu Quoc Phong, Yu. I. Lyubich, A spectral criterion for almost periodicity for one-parameter semigroups. (Russian) {\it Teor. Funktsii Funktsional. Anal. i Prilozhen.} No. {\bf 47} (1987), 36--41; translation in {\it J. Soviet Math.} {\bf 48}, No. 6 (1990), 644--647.


\bibitem{vu}
 Vu Quoc Phong, Stability and almost periodicity of
trajectories of periodic processes. {\it J. Differential
Equations}, {\bf 115} (1995), 402--415.

\bibitem{vu2}
 Vu Quoc Phong, Almost periodic and strongly stable semigroups of operators. In {\it Linear operators} (Warsaw, 1994),
 401--426, Banach Center Publ., {\bf 38}, Polish Acad. Sci., Warsaw, 1997.



\end{thebibliography}

\end{document}